\newtheorem{theorem}{Theorem}[section]
\newtheorem{problem}[theorem]{Problem}
\newtheorem{claim}{Claim}
\newtheorem{lemma}[theorem]{Lemma}
\def \no {\noindent}
 \def \sm {\setminus}
 \def \es {\emptyset}
 \def \sset {\sqsubseteq}
\newenvironment{proof}[1][]%
{\noindent {\setcounter{equation}{0}\it Proof.
}{#1}{}}{\hfill$\Box$\vspace{2ex}}
\def\longbox#1{\parbox{0.85\textwidth}{#1}}
\begin{document}

\title{Polynomial Cases for the Vertex Coloring Problem}

\author{T. Karthick \thanks{Computer Science Unit, Indian Statistical
Institute, Chennai Centre, Chennai 600029, India.}
\and%
Fr\'ed\'eric Maffray\thanks{CNRS, Laboratoire G-SCOP, Univ. Grenoble, France.}
\and%
Lucas Pastor\thanks{Laboratoire G-SCOP, Univ. Grenoble, France.}}

\maketitle

\begin{abstract}
The computational complexity of the Vertex Coloring problem is known
for all hereditary classes of graphs defined by forbidding two
connected five-vertex induced subgraphs, except for seven cases.  We
prove the polynomial-time solvability of four of these problems: for
($P_5$, dart)-free graphs, ($P_5$, banner)-free graphs, ($P_5$,
bull)-free graphs, and (fork, bull)-free graphs.
\end{abstract}

\noindent{\bf Keywords}.  Graph algorithms; Vertex coloring;
Two forbidden induced subgraphs; $P_5$-free graphs.

\section{Introduction}

Vertex coloring of graphs is one among the basic graph colorings and
has a long history starting with the four color problem, and is widely
studied in graph theory and in theoretical computer science.  It
occupies a central place in the complexity theory of algorithms and
arises naturally in many real world applications such as storage
problem, register allocation and time table scheduling.  Recent
publications show that vertex coloring problems still receive maximum
attention.

A \emph{vertex coloring} (or simply \emph{coloring}) of a graph $G$ is
an assignment of colors to the vertices of $G$ such that no two
adjacent vertices receive the same color.  That is, a partitioning of
the vertex set of $G$ into stable sets (called \emph{color classes}),
where a \emph{stable set} is a set of pairwise nonadjacent vertices.
The minimum number of colors required to color $G$ is called the
\emph{chromatic number} of $G$, and is denoted by $\chi(G)$.  Given a
graph $G$, the \textsc{Minimum Vertex Coloring} (VC) problem is to
determine the chromatic number $\chi(G)$.  The VC problem is well
known to be $NP$-complete in general, see \cite{GJ}, and also in may
restricted classes of graphs.  Lund and Yannakakis \cite{LY} showed
that there exists a constant $\epsilon > 0$ such that approximating
the chromatic number of an arbitrary graph within a factor of
$n^\epsilon$ is $NP$-hard, where $n$ is the number of vertices.  This
result is further improved by Feige and Kilian \cite{FK}, who proved
that the chromatic number cannot be approximated within a factor of
$n^{1-\epsilon}$, for any $\epsilon > 0$, unless $NP \neq ZPP$.  These
algorithmic issues are main motivations for current research to study
the VC problem in restricted classes of graphs.

A class of graphs $\cal{X}$ is {\it hereditary} if every induced
subgraph of a member of $\cal{X}$ is also in $\cal{X}$.  If $\cal{F}$
is a family of graphs, a graph $G$ is said to be {\it $\cal{F}$-free}
if it contains no induced subgraph isomorphic to any graph in
$\cal{F}$.  In this paper, we are interested in the VC problem for
some hereditary classes of graphs which are defined by two forbidden
induced subgraphs.  The VC problem remains $NP$-complete even for
restricted classes of graphs, such as triangle-free graphs \cite{MP},
$P_6$-free graphs \cite{Huang}, and $K_{1, 3}$-free graphs (see
\cite{LM}).  But, for many classes of graphs, such as perfect graphs
\cite{GLS} and for ($2P_3$, triangle)-free graphs
\cite{2P3-triangle-free}, the VC problem can be solved in polynomial
time.  The VC problem for $P_5$-free graphs is $NP$-complete
\cite{Kral}, but for every fixed $k$, the problem of coloring a
$P_5$-free graph with $k$ colors admits a polynomial-time algorithm
\cite{HKLSS}.  Kr\'al et al.~\cite{Kral} showed that the VC problem is
solvable in polynomial time for $H$-free graphs, whenever $H$ is a
(not necessarily proper) induced subgraph of $P_4$ or $P_3 +K_1$;
otherwise, the problem is $NP$-complete.  When we forbid two induced
subgraphs, only partial results are known for the VC problem.  The
motivation of this paper is the following open problem of Golovach et
al.  \cite{comp-survey}.

\begin{problem}[\cite{comp-survey}]
Complete the classification of the complexity of the \textsc{Vertex
Coloring} problem for $(G_1, G_2)$-free graphs.
\end{problem}

We refer to \cite{comp-survey} for a recent comprehensive survey and
for other open problems on the computational complexity of the VC
problem for classes of graphs defined by forbidden induced subgraphs,
and we refer to Theorem~1 of \cite{ML}, for the complexity dichotomy
of the VC problem for some classes of graphs which are defined by two
forbidden induced subgraphs.  It is known that the VC problem is
either $NP$-complete or polynomial time solvable for the classes of
graphs which are defined by two four-vertex forbidden induced
subgraphs, except for three classes of graphs namely, $(O_4,
C_4)$-free graphs, $(K_{1, 3}, O_4)$-free graphs, and for $(K_{1, 3},
K_2+O_2)$-free graphs.  Recently, the complexity dichotomy of the VC
problem for classes of graphs which are defined by two connected
five-vertex forbidden induced subgraphs received considerable
attention.  The VC problem is known to be solvable in polynomial time
for: $(P_5$, gem)-free graphs \cite{BBKRS}, $(P_5,
\overline{P_5})$-free graphs \cite{HL}, ($P_5,
\overline{P_3+O_2}$)-free graphs \cite{M}, ($P_5,
\overline{P_3+P_2}$)-free graphs \cite{ML}, and for ($P_5,
K_5-e$)-free graphs \cite{ML}.  In particular, the complexity
dichotomy of the VC problem is known for classes of graphs which are
defined by two connected five-vertex forbidden induced subgraphs
except for the following seven cases: (fork, bull)-free graphs, and
($P_5, H$)-free graphs, where $H\in \{\overline{K_3+O_2}, K_{2, 3},
\mbox{dart,~banner,~bull}, \overline{2P_2+P_1}\}$.  

 \begin{figure}
\centering
 \includegraphics{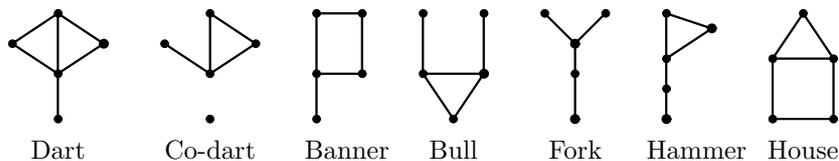}
\caption{Some special graphs.}\label{fig:sg}
\end{figure}

In the weighted version of the problem, we are given a graph and an
integer weight function $w$ on $V(G)$, and the \textsc{Minimum
Weighted Vertex Coloring} (WVC) problem is to find $k$ not necessarily
different stable sets $S_1, S_2, \ldots, S_k$ such that every vertex
$x$ belongs to at least $w(x)$ of these sets.  The smallest such $k$
is denoted by $\chi_w(G)$ and is called the \emph{weighted chromatic
number} of $G$; the stable sets $S_i$ are called a \emph{weighted
coloring} of $G$.  Note that in the context of the WVC problem, the
size of the input is considered to be $|V(G)|+|E(G)|+\sum_{x\in
V(G)}w(x)$.  Hence, algorithms for weighted graphs are polynomial on
the sum of weights but not necessarily on the number of vertices.
However, whenever we want to solve the (unweighted) coloring problem
on a graph $G$ and we reduce it to a weighted problem on an another
graph, it will always be the case that the size of the reduced
instance is not larger than $|V(G)|+|E(G)|$ (as can be easily checked,
since the reduction usually consists in replacing a subset $X$ of
vertices with one vertex of weight at most $|X|$); hence the final
complexity will be polynomial in $|V(G)|$.

 In this paper, using decomposition techniques, we establish structure
 theorems and derive the polynomial time solvability of the WVC
 problem for the following classes (see Figure~\ref{fig:sg}), which are four
 of the seven open cases mentioned above:
 \begin{itemize}
 \item ($P_5$, dart)-free graphs,
 \item ($P_5$, banner)-free graphs,
 \item $(P_5$, bull)-free graphs,
 \item (fork, bull)-free graphs.
 \end{itemize}

Rao \cite{Rao} showed that the VC problem is polynomial-time solvable
for graphs of bounded cliquewidth.  Note that the cliquewidth is
unbounded for each of the four classes above.  This is because each
class contains the class of co-triangle-free graphs, which has
unbounded cliquewidth since even the class of bipartite graphs has
unbounded cliquewidth (see \cite{MR}).

\section{Notation and preliminaries}

For notation and terminology which are not defined here, we follow
\cite{BLeS}.  All our graphs are finite, simple and undirected.  For
any integer $k$, we write $[k]$ to denote the set $\{1, 2, \ldots,
k\}$.  Let $K_n$, $P_n$, $C_n$ and $O_n$ denote respectively the
complete graph, the chordless path, the chordless cycle, and the
edgeless graph on $n$ vertices.  The graph $K_3$ is usually called a
\emph{triangle}.  A \emph{triad} in a graph $G$ is a subset of three
mutually non-adjacent vertices.

Given two vertex-disjoint graphs $G_{1}$ and $G_{2}$, the graph $G_1+
G_2$ is the graph with $V(G_1+ G_2)$ $=V(G_1)\cup V(G_2)$ and
$E(G_{1}+ G_{2})$ $=$ $E(G_1)\cup E(G_2)$.  For any positive integer
$k$, $kG$ denotes the union of $k$ graphs each isomorphic to $G$.  For
a graph $G$, the complement of $G$ is denoted by $\overline{G}$.  The
graph $\overline{P_5}$ is usually called the \emph{house}.  The
complement of a dart is called a \emph{co-dart}.  See
Figure~\ref{fig:sg} for some of the special graphs used in this paper.

Let $G$ be a graph.  For $S\subseteq V(G)$ we denote by $G[S]$ the
induced subgraph of $G$ with vertex-set $S$, and we simply write $[S]$
instead of $G[S]$.  If $H$ is an induced subgraph of $G$, then we
write $H \sset G$.  For a vertex $v \in V(G)$, the neighborhood $N(v)$
of $v$ is the set $\{u \in V(G) \mid uv \in E(G)\}$.  Given a subset
$S \subseteq V(G)$ and $v \in V(G) \sm S$, let $N_S(v)$ denote the set
$N(v) \cap S$.

For $v \in V(G)$ and $S\subseteq V(G)$, we say that a vertex $v$ is
\emph{complete} to $S$ if $v$ is adjacent to every vertex in $S$, and
that $v$ is \emph{anticomplete} to $S$ if $v$ has no neighbor in $S$.
For two sets $S,T\subseteq V(G)$ we say that $S$ is \emph{complete} to
$T$ if every vertex of $S$ is adjacent to every vertex of $T$, and we
say that $S$ is \emph{anticomplete} to $T$ if no vertex of $S$ is
adjacent to any vertex of $T$.

A {\it clique} in a graph $G$ is a subset of pairwise adjacent
vertices in $G$. The maximum size of a clique in $G$ is denoted by
$\omega(G)$.    A \emph{clique cover} of a graph $G$ is a partition
of $V(G)$ into cliques.  Hence of a coloring of a graph $G$ is a
clique cover of $\overline{G}$ and vice-versa.

A \emph{hole} in a graph is an induced cycle on at least five
vertices, and an \emph{anti-hole} is the complement of a hole. 
The length of a hole or anti-hole is the number of vertices in it.
A hole or anti-hole is \emph{odd} if its length is odd.  A graph $G$ is
\emph{perfect} if $\chi(H)=\omega(H)$ for every induced subgraph $H$
of $G$.  The Strong Perfect Graph Theorem (SPGT)
\cite{spgt} states that a graph $G$ is perfect if and only if it does
not contain an odd hole or an odd anti-hole.

A {\it clique separator} (or \emph{clique cutset}) in a connected
graph $G$ is a subset $Q$ of vertices in $G$ such that $Q$ is a clique
and such that the graph induced by $V(G) \sm Q$ is disconnected.  For
a given graph $G$, a \emph{$C$-block} is a maximal induced subgraph of
$G$ without proper clique separators.  For a class of graphs
$\cal{X}$, let $[\cal{X}]_C$ denotes the set of all graphs whose every
$C$-block belongs to $\cal{X}$.
\begin{theorem}[\cite{ML}]\label{thm:ML-2}
If the WVC problem can be solved in polynomial time for a hereditary
class $\cal{X}$, then it is so for $[\cal{X}]_C$.  \hfill {$\Box$}
\end{theorem}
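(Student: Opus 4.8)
\medskip
\noindent\textit{Proof plan.}\
The plan is to reduce, in polynomial time, the computation of $\chi_w$ on a graph $G\in[\mathcal{X}]_C$ to the computation of $\chi_w$ on the atoms of a clique‑cutset decomposition of $G$, each of which will turn out to lie in $\mathcal{X}$. The engine of the reduction is the following gluing property. Suppose $Q$ is a clique cutset of $G$, and write $V(G)=V_1\cup V_2$ with $V_1\cap V_2=Q$, no edges between $V_1\setminus Q$ and $V_2\setminus Q$, and $G_i=G[V_i]$. Then $\chi_w(G)=\max\{\chi_w(G_1),\chi_w(G_2)\}$, and moreover, given weighted colorings of $G_1$ and of $G_2$, one can construct in polynomial time a weighted coloring of $G$ with $\max\{\chi_w(G_1),\chi_w(G_2)\}$ stable sets. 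The inequality $\chi_w(G)\ge\max\{\chi_w(G_1),\chi_w(G_2)\}$ is immediate since $G_1$ and $G_2$ are induced subgraphs of $G$.

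For the reverse inequality, let $\mathcal{S}^1=(S^1_1,\dots,S^1_{k_1})$ and $\mathcal{S}^2=(S^2_1,\dots,S^2_{k_2})$ be weighted colorings of $G_1$ and $G_2$ with $k_1=\chi_w(G_1)\ge k_2=\chi_w(G_2)$; pad $\mathcal{S}^2$ with $k_1-k_2$ empty stable sets so that both have $k_1$ members. Since deleting a vertex from a stable set leaves a stable set, we may assume that every $x\in Q$ lies in exactly $w(x)$ sets of $\mathcal{S}^1$ and in exactly $w(x)$ sets of $\mathcal{S}^2$. Put $A_x=\{i\in[k_1]\mid x\in S^1_i\}$ and $B_x=\{i\in[k_1]\mid x\in S^2_i\}$ for $x\in Q$. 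Because $Q$ is a clique, each stable set meets $Q$ in at most one vertex, so the sets $A_x$ ($x\in Q$) are pairwise disjoint and so are the sets $B_x$; also $|A_x|=|B_x|=w(x)$ and $\sum_{x\in Q}w(x)=\chi_w(G[Q])\le\chi_w(G_1)=k_1$. Consequently there is a permutation $\pi$ of $[k_1]$ with $\pi(B_x)=A_x$ for every $x\in Q$ (on $\bigcup_{x\in Q}B_x$ take the union of arbitrary bijections $B_x\to A_x$, and on the complement, which has the same size $k_1-\sum_{x\in Q}w(x)$ on both sides, take any bijection). Setting $S_i=S^1_i\cup S^2_{\pi^{-1}(i)}$ for $i\in[k_1]$, the choice of $\pi$ ensures that $S^1_i$ and $S^2_{\pi^{-1}(i)}$ contain exactly the same (at most one) vertex of $Q$; together with the absence of edges between $V_1\setminus Q$ and $V_2\setminus Q$ this makes each $S_i$ stable, and every vertex of $G$ clearly lies in at least its required number of the $S_i$. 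This yields a weighted coloring of $G$ with $k_1$ stable sets, in time polynomial in $|V(G)|+|E(G)|+\sum_{x}w(x)$, which proves the gluing property.

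Next I would apply Tarjan's algorithm for decomposition by clique separators, which in polynomial time produces a binary decomposition tree of $G$ whose internal nodes are splits of the type above and whose leaves (the \emph{atoms}) are induced subgraphs of $G$ having no clique cutset; the tree has $O(|V(G)|)$ nodes and every atom has at most $|V(G)|$ vertices. Every atom $A$ extends to a maximal induced subgraph of $G$ with no proper clique separator, that is, to a $C$-block of $G$; since $G\in[\mathcal{X}]_C$ this $C$-block belongs to $\mathcal{X}$, and since $\mathcal{X}$ is hereditary, $A\in\mathcal{X}$. By hypothesis we can therefore compute, in polynomial time, a weighted coloring of $A$ with $\chi_w(A)$ stable sets, for each atom $A$.

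Finally, I would traverse the decomposition tree from the leaves to the root, at each internal node applying the gluing construction to the weighted colorings already obtained for its two children, producing a weighted coloring of the subgraph attached to that node with a number of stable sets equal to the maximum of $\chi_w$ over the atoms below it. By induction on the tree this gives a weighted coloring of $G$ with $\max_{A}\chi_w(A)$ stable sets, and the gluing property gives $\chi_w(G)=\max_{A}\chi_w(A)$. As the tree has polynomially many nodes, each gluing step is polynomial, and the weights involved never exceed those of $G$ (a clique cutset keeps its weights in both parts), the whole procedure runs in time polynomial in $|V(G)|+|E(G)|+\sum_xw(x)$; and, as observed just before the statement, when this scheme is used to color an unweighted graph $G$ the intermediate instances stay of size at most $|V(G)|+|E(G)|$, so the final running time is polynomial in $|V(G)|$. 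I expect the main obstacle to be the gluing property in the weighted setting: the delicate point is that, after normalizing both colorings so that each $x\in Q$ occupies exactly $w(x)$ stable sets, the partial bijections $B_x\to A_x$ assemble into a single permutation of $[k_1]$, which rests on the disjointness of the $A_x$'s and of the $B_x$'s (because $Q$ is a clique) and on $\sum_{x\in Q}w(x)\le k_1$.
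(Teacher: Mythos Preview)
The paper does not prove this theorem at all: it is quoted from \cite{ML} and closed with a box. So there is no ``paper's own proof'' to compare against. Your argument is the standard reduction via Tarjan's clique-cutset decomposition together with the gluing identity $\chi_w(G)=\max\{\chi_w(G_1),\chi_w(G_2)\}$ across a clique cutset, and it is correct. The delicate point you yourself flag---assembling the partial bijections $B_x\to A_x$ into a single permutation of $[k_1]$---is handled correctly: the $A_x$ (resp.\ $B_x$) are pairwise disjoint because $Q$ is a clique, they have matching sizes $w(x)$, and their total size $\sum_{x\in Q}w(x)=\chi_w(G[Q])$ is at most $k_1$, so the complements also match. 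Your remark that Tarjan's atoms need not be maximal but embed in $C$-blocks, whence hereditarity of $\mathcal{X}$ places them in $\mathcal{X}$, is the right way to connect the algorithmic decomposition to the definition of $[\mathcal{X}]_C$.
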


A \emph{homogeneous set} in a graph $G$ is a set $S\subseteq V(G)$
such that every vertex in $V(G)\setminus S$ is either complete or
anticomplete to $S$.  A homogeneous set is \emph{proper} if it
contains at least two vertices and is different from $V(G)$.  A
\emph{module} is a homogeneous set $M$ such that every homogeneous set
$S$ satisfies either $S\subseteq M$ or $M\subseteq S$ or $S\cap
M=\emptyset$.  In particular $V(G)$ is a module and every one-vertex
set is a module.  The {\it trivial modules} in $G$ are $V(G)$,
$\emptyset$, and all one-elementary vertex sets.  A graph $G$ is {\it
prime} if it contains only trivial modules.  Note that prime graphs
with at least three vertices are connected.  It follows from their
definition that the modules form a ``nested'' family, so their
inclusion relation can be represented by a tree, and any graph $G$ has
at most $2|V(G)|-1$ modules.  The modules of a graph $G$ can be
produced by an algorithm of linear (i.e., $O(|V(G)|+|E(G)|)$) time
complexity \cite{CHPT,CH,MS}.  For a class of graphs $\cal{X}$, let
$[\cal{X}]_P$ denotes the set of all graphs whose every prime induced
subgraph belongs to $\cal{X}$.

\begin{theorem}[\cite{ML}, see also \cite{HL}]\label{thm:ML-1}
If the WVC problem can be solved in polynomial time for a hereditary
class $\cal{X}$, then it is so for $[\cal{X}]_P$.  \hfill {$\Box$}
\end{theorem}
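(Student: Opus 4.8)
The plan is to run a recursive algorithm guided by the modular decomposition of $G$, reducing the weighted coloring of a graph in $[\cal{X}]_P$ to weighted colorings of its prime induced subgraphs, which belong to $\cal{X}$ by hypothesis.

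The combinatorial core is a \emph{module-contraction lemma}: if $M$ is a module of a weighted graph $(G,w)$ with $|V(G)|\ge 2$ and $G'$ is obtained from $G$ by deleting $M$ and adding one new vertex $m$ having the same neighbourhood outside $M$ as every vertex of $M$ (well-defined since $M$ is a module), with $w'(m):=\chi_w(G[M])$ and $w':=w$ on the remaining vertices, then $\chi_{w'}(G')=\chi_w(G)$. I would prove both inequalities directly. For $\chi_{w'}(G')\le\chi_w(G)$: take an optimal weighted coloring $(S_i)$ of $G$; the nonempty sets $S_i\cap M$ form (with multiplicity in $i$) a weighted coloring of $G[M]$, so there are at least $\chi_w(G[M])=w'(m)$ indices $i$ with $S_i\cap M\ne\es$, and in each such $S_i$ I replace $S_i\cap M$ by $\{m\}$; this is legal because a vertex of $V(G)\setminus M$ adjacent to $m$ in $G'$ is complete to $M$ and hence cannot lie in $S_i$, and it produces a weighted coloring of $G'$ with the same number of colors. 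For $\chi_w(G)\le\chi_{w'}(G')$: take an optimal weighted coloring $(T_j)$ of $G'$; there are at least $w'(m)=\chi_w(G[M])$ indices $j$ with $m\in T_j$, so into these sets (after deleting $m$) I can substitute the classes of an optimal weighted coloring of $G[M]$, again using the module property to see that no edge is created, which gives a weighted coloring of $G$ with the same number of colors. Iterating this lemma over the maximal strong modules $M_1,\dots,M_k$ of $G$ (each of which remains a module after the earlier contractions) yields $\chi_w(G)=\chi_{w'}(Q)$, where $Q=G/\{M_1,\dots,M_k\}$ is the quotient graph on vertices $m_1,\dots,m_k$ and $w'(m_i)=\chi_w(G[M_i])$.

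The algorithm then computes the modular decomposition tree of $G$ in linear time \cite{CHPT,CH,MS} and evaluates $\chi_w$ bottom-up. A leaf $\{v\}$ gives $w(v)$. At an internal node with children $G[M_1],\dots,G[M_k]$, the theory of modular decomposition tells us that the quotient $Q$ is either edgeless (then $\chi_w(G)=\max_i\chi_w(G[M_i])$), or complete (then $\chi_w(G)=\sum_i\chi_w(G[M_i])$), or a prime graph on $k\ge 4$ vertices. In the last case $Q$ is isomorphic to the subgraph of $G$ induced by one representative per module, hence to a prime induced subgraph of $G$; since $G\in[\cal{X}]_P$ we have $Q\in\cal{X}$, so we may apply the polynomial-time WVC algorithm for $\cal{X}$ to $Q$ with the weights $w'(m_i)=\chi_w(G[M_i])$ already computed for the children, and by the contraction lemma this returns $\chi_w(G)$. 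Correctness follows by induction on $|V(G)|$, using that $[\cal{X}]_P$ is hereditary, so every $G[M_i]$ is again in $[\cal{X}]_P$.

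It remains to verify that the running time stays polynomial, which is where the weighted setting needs a little care. There are $O(|V(G)|)$ tree nodes, and at each prime node exactly one call to the WVC oracle for $\cal{X}$ is made; since $\chi_w(G[M_i])\le\sum_{x\in M_i}w(x)$, the total weight of that call's instance is at most $\sum_{x\in V(G)}w(x)$, so each call takes time polynomial in $|V(G)|+|E(G)|+\sum_x w(x)$, and summing over all nodes keeps the whole algorithm polynomial. I expect the two directions of the module-contraction lemma to be the only genuinely nontrivial part; the modular decomposition, the max/sum rules at series and parallel nodes, and the weight-size bookkeeping are routine.
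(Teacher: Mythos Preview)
The paper does not actually prove this theorem: it is stated with a reference to \cite{ML} (and \cite{HL}) and closed immediately with a $\Box$. Your argument via the module-contraction lemma and bottom-up evaluation along the modular decomposition tree is the standard proof of this fact and is essentially what those references do; the two directions of the contraction lemma and the weight bookkeeping are handled correctly.
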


\section{WVC for ($P_5$, dart)-free graphs}

Before considering ($P_5$, dart)-free graphs, we first look at ($P_5,
C_5$, dart)-free graphs and their complements.
 
\begin{theorem}\label{thm:hcd}
Let $G$ be any prime (house, co-dart)-free graph that contains an odd
hole of length at least~$7$.  Then $G$ is triangle-free.
\end{theorem}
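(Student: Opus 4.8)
The plan is to argue by contradiction: suppose $G$ is a prime (house, co-dart)-free graph containing an odd hole $C$ of length $\geq 7$, and suppose $G$ also contains a triangle $T$. The strategy is to study how vertices outside $C$ attach to $C$, show the attachments are severely constrained, and then locate a co-dart or a house (or contradict primality). First I would fix the odd hole $C = v_1 v_2 \cdots v_k v_1$ with $k \geq 7$ odd, and for each vertex $x \notin V(C)$ analyze $N_C(x)$. A standard first step is to rule out the configurations that immediately create a house: for instance, if $x$ is adjacent to two consecutive vertices $v_i, v_{i+1}$ and to no other vertex of $C$, then $\{x, v_{i-1}, v_i, v_{i+1}, v_{i+2}\}$ induces a house, so that pattern is forbidden; similarly I would enumerate the small attachment patterns on three or four consecutive hole-vertices and see which survive the house-freeness constraint. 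The upshot I expect is that every $x \notin V(C)$ is either anticomplete to $V(C)$, or complete to $V(C)$, or has a neighborhood on $C$ that is a union of one or two ``long'' arcs — and with $k \geq 7$ the house constraint together with the odd-hole structure should force the possibilities down to: anticomplete, complete, or ``$x$ sees exactly $V(C) \setminus \{v_i\}$ for some $i$'' type neighborhoods, or a dominating-ish pattern.

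Next I would use the co-dart-freeness. Recall the co-dart is the complement of the dart; concretely it is $K_1$ plus $\overline{P_4}$ suitably, i.e.\ it has an independent pair sitting in a specific way relative to a triangle-with-pendant in the complement — I would write it out explicitly and extract the forbidden pattern as: there is no set of five vertices inducing (co-dart). The key use is that three pairwise nonadjacent vertices of $C$ that are at pairwise distance $\geq 2$ along the cycle, together with a vertex $x$ adjacent to some but not all of them in a controlled way, tend to build a co-dart; since $C$ with $k\geq 7$ contains a triad (in fact an independent set of size $3$ inside any $5$ consecutive vertices when $k \geq 7$... more carefully, $v_1, v_3, v_5$ form a triad), a vertex complete to $C$ together with such a triad, plus one more hole vertex, should be checked against the co-dart. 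The plan is to show a vertex complete to $V(C)$ cannot exist (it would, with a well-chosen triad of $C$ and one more vertex, induce a co-dart or a house), and likewise to eliminate the ``misses one vertex'' pattern.

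Having constrained outside vertices to be essentially anticomplete to $V(C)$ (plus possibly some tightly controlled exceptions), I would then invoke primality: a hole $C$ all of whose outside vertices are anticomplete to it would make $V(C)$ contained in a proper module-like structure, or more precisely it would allow building a nontrivial module, contradicting that $G$ is prime — unless $G = C$ itself, which is triangle-free, giving the conclusion directly, or unless the residual exceptional attachments can be leveraged. Finally, to bring in the assumed triangle $T$: since $T$ cannot be entirely inside $C$ (a hole has no triangle), $T$ uses at least one outside vertex, so $T$ meets the ``anticomplete to $C$'' part; I would then connect the triangle to the hole via a shortest path and apply house/co-dart-freeness along that path to derive the final contradiction, or conclude that no triangle can coexist with such a hole in a prime such graph. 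The main obstacle I anticipate is the case analysis of attachment patterns of a single vertex to the long odd hole: making sure that every surviving pattern is genuinely handled, and in particular correctly identifying which five-vertex subsets of ``hole $+$ one vertex'' or ``hole $+$ two vertices'' form a house versus a co-dart — this bookkeeping, rather than any deep idea, is where the real work lies; the primality argument at the end and the triangle-linking step should be comparatively routine once the attachment structure is pinned down.
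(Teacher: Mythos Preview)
Your overall strategy---analyze attachments of outside vertices to the odd hole, use primality to dispose of the vertices complete to it, then link a hypothetical triangle to the hole via a shortest path---is exactly the paper's approach. However, two concrete steps in your sketch are off, and fixing them changes the shape of the attachment analysis.

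First, your sample house is wrong: if $x$ sees exactly $v_i, v_{i+1}$ on $C$, then $\{x, v_{i-1}, v_i, v_{i+1}, v_{i+2}\}$ has only five edges and induces a \emph{bull} (triangle $xv_iv_{i+1}$ with two disjoint pendants), not a house. So house-freeness does \emph{not} forbid two-consecutive attachments, and the ``arc'' classification you anticipate (anticomplete / complete / misses one vertex) never materializes. The correct structural claim---and the one the paper actually proves---is that for every vertex $v$ not complete to the hole, the set $N_C(v)$ is \emph{stable}. This comes from co-dart-freeness, not house-freeness: a co-dart is a paw plus an isolated vertex, so if $v$ sees $a_1,a_2$ but not $a_3$, then $\{v,a_1,a_2,a_3,a_j\}$ is a co-dart for any hole vertex $a_j$ far from $a_1,a_2,a_3$ unless $v$ sees $a_j$; iterating forces $v$ to see too much and yields a contradiction.

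Second, you will not rule out vertices complete to $C$ by a direct forbidden-subgraph argument: for such an $x$, any five vertices from $C\cup\{x\}$ have $x$ of degree $4$, so no isolated vertex and hence no co-dart, and one checks no house either. The paper handles this set $B$ via primality exactly as you suspect, but the argument is that $B$ is complete to \emph{every} vertex outside $A\cup B$ (again via a co-dart), whence $V(G)\setminus B$ would be a proper homogeneous set. One technical device you are missing that streamlines everything is the paper's opening move: replace the hole $C$ by a \emph{maximal} blow-up $A=A_1\cup\cdots\cup A_\ell$ (each $A_i$ complete to $A_{i\pm 1}$, anticomplete otherwise); this absorbs borderline attachment types into $A$ and keeps the case analysis short. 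With ``$N_A(v)$ stable'' and $B=\emptyset$ in hand, the shortest-path-to-triangle endgame you describe is precisely what the paper does, and it finishes with a short case split on path length $k\le 2$ using both the house and the co-dart.
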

\begin{proof}
Let $\ell$ be the length of any odd hole of length at least~$7$ in
$G$.  It follows that there exist $\ell$ non-empty and pairwise
disjoint subsets $A_1, \ldots, A_\ell$ of $V(G)$ such that, for each
$i$ modulo~$\ell$, the set $A_i$ is complete to $A_{i+1}$, and there
are no other edges between any two of these sets.  Let
$A=A_1\cup\cdots\cup A_\ell$.  We choose these sets so that $A$ is
inclusionwise maximal.  Let $B$ be the set of vertices of
$V(G)\setminus A$ that are complete to $A$.  We claim that:
\begin{equation}\label{hcd1}
\mbox{Each $A_i$ is a stable set.}
\end{equation}
Proof: Suppose that $u,v$ are two adjacent vertices in $A_i$.  Pick an
arbitrary vertex $a_j\in A_j$ for each $j\in\{i+1, i+2, i+4\}$.  Then
$\{u,v,a_{i+1}, a_{i+2},a_{i+4}\}$ induces a co-dart.  Thus
(\ref{hcd1}) holds.

\begin{equation}\label{hcd2}
\mbox{For any vertex $v\in V(G)\setminus (A\cup B)$, the set
$N_A(v)$ is a stable set.}
\end{equation}
Proof: Suppose the contrary.  By (\ref{hcd1}), and since $v\notin B$,
we can pick a vertex $a_{i}\in A_{i}$ for each $i$ such that $v$ is
adjacent to $a_1$ and $a_2$ and not to $a_3$.  Then $v$ is adjacent to
$a_i$ for all $i\in\{5,...,\ell-1\}$, for otherwise
$\{a_1,a_2,a_3,v,a_i\}$ induces a co-dart; but then
$\{a_1,v,a_5,a_6,a_3\}$ induces a co-dart.  Thus (\ref{hcd2}) holds.

\begin{equation}\label{hcd3}
\mbox{$B=\emptyset$.}
\end{equation}
Proof: Suppose that $B\neq\emptyset$.  Consider any $b\in B$ and any
$v\in V(G)\setminus (A\cup B)$.  For each $i$ pick a vertex $a_i\in
A_i$.  By (\ref{hcd2}) and since $\ell$ is odd, there is an integer
$j$ such that $v$ has no neighbor in $\{a_j,a_{j+1}\}$, say $j=1$;
moreover $v$ has a non-neighbor $a$ in $\{a_4,a_5\}$.  Then $b$ is
adjacent to $v$, for otherwise $\{a,b,a_1,a_2,v\}$ induces a co-dart.
This means that $B$ is complete to $V(G)\setminus (A\cup B)$.  Since
$B$ is also complete to $A$, we deduce that $V(G)\setminus B$ is a
homogeneous set, which contradicts that $G$ is prime.  Thus
(\ref{hcd3}) holds.

\medskip

To finish the proof of the theorem, let us assume on the contrary that
$G$ contains a triangle $T=\{u,v,w\}$.  By~(\ref{hcd1}), the graph
$G[A]$ is triangle-free.  Moreover, by~(\ref{hcd2}) and (\ref{hcd3}),
no triangle of $G$ has two vertices in $A$.  So $T$ contains at most
one vertex from $A$.  Note that $G$ is connected, for otherwise the
vertex-set of the component that contains $A$ would be a proper
homogeneous set.  So there is a shortest path $P$ from $A$ to $T$.
Let $P=p_1$-$\cdots$-$p_k$, with $p_1\in A$, $p_2, \ldots, p_k\in
V(G)\setminus A$, $p_k=u$, $k\ge 1$, and $v,w\notin A$.  We choose $T$
so as to minimize $k$.  We can pick vertices $a_i\in A_i$ for each
$i\in\{1,...,\ell\}$ so that $p_1=a_1$.  If $k\ge 3$, then
$\{p_1,p_{k-1},u,v,w\}$ induces a co-dart.  So $k\le 2$.  Suppose that
$k=1$.  So $u=p_1=a_1$.  By (\ref{hcd2}) and (\ref{hcd3}), $v$ and $w$
have no neighbor in $\{a_2,a_\ell\}$.  If $a_3$ has no neighbor in
$\{v,w\}$, then $\{a_3,a_\ell,a_1,v,w\}$ induces a co-dart.  If $a_3$
has only one neighbor in $\{v,w\}$, then $\{a_3, a_2, a_1, v, w\}$
induces a house.  So $a_3$ is adjacent to both $v,w$, and
$\{a_3,v,w\}$ is a triangle.  Then we can repeat this argument with
$a_3, a_5, ...$ instead of $a_1$, which leads to a contradiction since
$\ell$ is odd.  Therefore $k=2$.  So $u=p_2$.  Since $\ell$ is odd,
and by (\ref{hcd2}) and (\ref{hcd3}), we may assume that $u$ is
adjacent to $a_1$ and not to $a_2$ and $a_3$.  Then $a_3$ has a
neighbor in $\{v,w\}$, say $v$, for otherwise $\{a_3,a_1,u,v,w\}$
induces a co-dart, and then $a_3$ is not adjacent to $w$, for
otherwise $\{a_3,v,w\}$ is a triangle that contradicts the minimality
of $k$.  Then $w$ is not adjacent to $a_4$, by (\ref{hcd2}) and
(\ref{hcd3}), and $a_4$ has no neighbor in $\{u,v\}$, for otherwise
either $\{a_4,u,v\}$ is a triangle (contradicting the minimality of
$k$) or $\{a_3,a_4,u,v,w\}$ induces a house.  But then
$\{a_4,a_1,u,v,w\}$ induces a co-dart, a contradiction.  This
completes the proof of the theorem.
\end{proof}

\begin{theorem}\label{thm:P5C5D-free-compl}
The \textsc{Weighted Vertex Coloring} problem can be solved in
polynomial time in the class of ($P_5, C_5$, dart)-free graphs.
 \end{theorem}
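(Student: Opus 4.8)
The plan is to reduce to prime graphs via Theorem~\ref{thm:ML-1} and then feed the complement into Theorem~\ref{thm:hcd}. The first ingredient is the elementary observation that a $(P_5,C_5)$-free graph contains no hole: for every $n\ge 6$ any five consecutive vertices of $C_n$ induce a $P_5$, while $C_5$ is forbidden outright. By the Strong Perfect Graph Theorem it follows that a $(P_5,C_5,\text{dart})$-free graph can fail to be perfect only by containing an odd anti-hole, and since $\overline{C_5}=C_5$ is forbidden such an anti-hole has length at least~$7$; equivalently, its complement is an odd hole of length at least~$7$.

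Second, let $G$ be a \emph{prime} $(P_5,C_5,\text{dart})$-free graph. Then $\overline{G}$ is prime as well, and because the house, $C_5$ and the co-dart are the complements of $P_5$, $C_5$ and the dart, $\overline{G}$ is $(\text{house},C_5,\text{co-dart})$-free. If $G$ is not perfect, then by the previous paragraph $\overline{G}$ contains an odd hole of length at least~$7$, so Theorem~\ref{thm:hcd} applies to $\overline{G}$ and shows that $\overline{G}$ is triangle-free, i.e.\ $\alpha(G)\le 2$. Thus every prime $(P_5,C_5,\text{dart})$-free graph is perfect or has independence number at most~$2$.

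To finish, let $\mathcal{X}$ be the class of graphs that are perfect or have independence number at most~$2$; this class is hereditary, being the union of two hereditary classes. WVC is polynomial on $\mathcal{X}$: on perfect graphs by~\cite{GLS}, and on a graph $H$ with $\alpha(H)\le 2$ because then $\overline{H}$ is triangle-free, so its only cliques are vertices and edges, and a minimum weighted clique cover of $\overline{H}$ (which is exactly an optimal weighted coloring of $H$) is obtained by a routine reduction to a degree-constrained subgraph ($b$-matching) problem. By the previous paragraph, every prime induced subgraph of a $(P_5,C_5,\text{dart})$-free graph lies in $\mathcal{X}$, so the whole class is contained in $[\mathcal{X}]_P$, and Theorem~\ref{thm:ML-1} gives the result.

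I expect no real obstacle here: essentially all of the structural content sits in Theorem~\ref{thm:hcd}, and what remains is to check the two easy algorithmic facts above and that $\mathcal{X}$ is hereditary, so that Theorem~\ref{thm:ML-1} may be applied in the stated form. (Note in particular that the clique-cutset reduction of Theorem~\ref{thm:ML-2} is not needed for this statement, since Theorem~\ref{thm:hcd} only assumes primeness.)
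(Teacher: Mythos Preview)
Your proof is correct and follows essentially the same route as the paper: reduce to prime graphs via Theorem~\ref{thm:ML-1}, apply Theorem~\ref{thm:hcd} to the complement to conclude that a prime $(P_5,C_5,\text{dart})$-free graph is perfect or $O_3$-free, and then invoke known polynomial algorithms for each case. The only cosmetic difference is that the paper cites \cite{ML} for the $O_3$-free case and \cite{perf-rec} for recognizing perfect graphs, whereas you supply your own $b$-matching sketch and sidestep recognition by testing $\alpha\le 2$ first.
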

\begin{proof} 
Let $G$ be a ($P_5, C_5$, dart)-free graph.  First suppose that $G$ is
prime.  If $G$ has no odd anti-hole of length at least 7, then by SPGT
\cite{spgt}, $G$ is perfect.  Otherwise, $G$ is $O_3$-free, by
Theorem~\ref{thm:hcd}.  Since the class of perfect graphs can be
recognized in polynomial time \cite{perf-rec}, and since the WVC
problem can be solved in polynomial time for perfect graphs
\cite{GLS}, and for $O_3$-free graphs \cite{ML}, WVC can be solved in
polynomial time for $G$.  By Theorem~\ref{thm:ML-1}, the same holds
when $G$ is not prime.
 \end{proof}

 \begin{figure}[htb]
\centering
\includegraphics{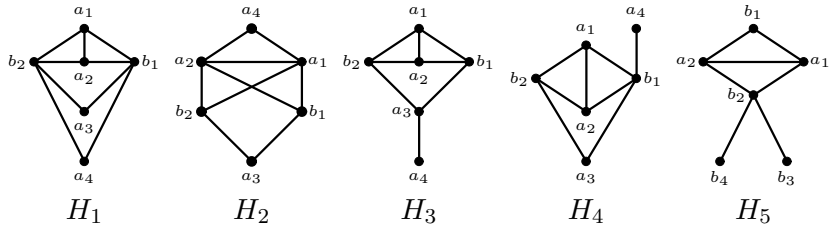}
\caption{Special graphs used in Lemmas~\ref{lem:H1H2-free}
and~\ref{lem:H3H4-free}.}\label{fig:H12345}
\end{figure}

\begin{lemma}\label{lem:H1H2-free}
Let $G$ be a prime dart-free graph.  Then $G$ is $(H_1, H_2)$-free.
\end{lemma}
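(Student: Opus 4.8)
The plan is to argue by contradiction. Suppose $G$ is a prime dart-free graph that contains an induced copy $H$ of $H_1$; I fix a convenient labelling of $V(H)$ and aim for a contradiction with either dart-freeness or primality of $G$. The case of $H_2$ will be settled by the same method, or reduced to the case of $H_1$.

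First, observe that $H_1$ cannot itself be prime, for otherwise $G=H_1$ would already contradict the lemma; hence $H_1$ has a proper homogeneous set, and I would pick one — in practice a pair $\{a,b\}$ of true or false twins, i.e.\ two vertices having the same neighbourhood in $V(H_1)\setminus\{a,b\}$. Since $G$ is prime, $\{a,b\}$ is not a homogeneous set of $G$, so some vertex $v\in V(G)\setminus V(H)$ is adjacent to exactly one of $a,b$, say $v\sim a$ and $v\not\sim b$. The heart of the argument is then a case analysis on the trace $N(v)\cap V(H)$: in each case I would exhibit four vertices of $H$ that, together with $v$, induce a dart (a diamond $K_4-e$ with a pendant vertex attached at one of its two degree-$3$ vertices), contradicting dart-freeness. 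For the few traces in which a dart does not appear immediately, I would propagate $v$'s adjacencies one step further, or re-run the argument with a different twin pair of $H$ and $v$ as the new splitter, or — if even that does not close the case — exhibit a proper homogeneous set of $G$ obtained from $\{a,b\}$ together with vertices that attach to $V(H)$ in the same way, again contradicting primality.

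For $H_2$, I would first check whether $H_2$ contains an induced $H_1$, in which case $H_2$-freeness follows at once; or whether $H_2$ is obtained from $H_1$ by deleting or adding a single vertex or edge, in which case only a handful of extra traces of the distinguishing vertex need be treated. Failing that, I carry out the same scheme (pick twins in $H_2$, invoke primality to get a splitter, case-analyse its trace, produce a dart) for $H_2$ from scratch.

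The step I expect to be the main obstacle is the completeness and bookkeeping of this case analysis. Dart-freeness is a relatively weak hypothesis — the dart has only five vertices, and deleting its pendant or a degree-$3$ vertex leaves fairly generic graphs — so for some traces of $v$ the contradiction does not drop out in one move; there one must chase further (non-)adjacencies of $v$ to $V(H)$, possibly introducing a second distinguishing vertex, before a dart or a primality violation surfaces. Ensuring that every trace is covered, and that the ``no dart'' branches really do contradict primality of $G$ rather than merely the local non-primality of $H_i$, is where the argument needs the most care.
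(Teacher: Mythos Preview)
Your plan matches the paper's proof exactly: in each $H_i$ the pair $\{a_1,a_2\}$ is a module of $H_i$, primality of $G$ supplies a splitter $x$ adjacent to $a_1$ and not to $a_2$, and a short chain of dart-avoidance constraints on the edges between $x$ and $V(H_i)\setminus\{a_1,a_2\}$ forces an induced dart. The execution turns out to be cleaner than you anticipate --- a single splitter suffices for both $H_1$ and $H_2$, with no need for a second distinguishing vertex, no escalation to a global homogeneous-set argument, and no reduction of $H_2$ to $H_1$.
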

\begin{proof}
Suppose to the contrary that $G$ contains an induced subgraph which is
isomorphic to $H_i$, for $i \in \{1, 2\}$ (as shown in
Figure~\ref{fig:H12345}).  Since $G$ is prime, $\{a_1, a_2\}$ is not a
module, so there exists a vertex $x \in V \setminus V(H_i)$ such that
(up to symmetry) $xa_1 \in E$ and $xa_2 \notin E$.  Then since $\{x,
a_1, b_1, b_2, a_2\}$ does not induce a dart, $x$ has a neighbor in
$\{b_1, b_2\}$.  By symmetry, we may assume that $xb_1 \in E$.  Then
since $\{x, a_1, a_2, b_1, a_3\}$ does not induce a dart, $xa_3 \in
E$.

\no Suppose that $i = 1$.  Then since $\{x, a_1, a_2, b_1, a_4\}$ does
not induce a dart, $xa_4 \in E$.  But, then $\{a_4, x, a_3, b_1,
a_2\}$ induces a dart.  So, $G$ is $H_1$-free.

\no Suppose that $i = 2$.  Then since $\{b_2, a_2, a_4, a_1, x\}$ and
$\{b_1, x, a_4, a_1, b_2\}$ do not induce a dart, we have $xb_2 \in
E$.  But, then $\{b_1, x, b_2, a_1, a_4\}$ or $\{b_1, a_3, b_2, x,
a_4\}$ induces a dart.  So, $G$ is $H_2$-free.

\no  This shows Lemma~\ref{lem:H1H2-free}. 
\end{proof}

\begin{lemma}\label{lem:H3H4-free}
Let $G$ be a prime ($P_5$, dart)-free graph.  Then $G$ is $(H_3, H_4,
H_5)$-free.
\end{lemma}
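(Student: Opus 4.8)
The plan is to follow exactly the template of Lemma~\ref{lem:H1H2-free}. For each $i\in\{3,4,5\}$ I would assume, for contradiction, that $G$ contains an induced copy of $H_i$ (with vertices labelled $a_1,a_2,\dots$ and $b_1,b_2,\dots$ as in Figure~\ref{fig:H12345}), and single out inside it a pair $\{a_1,a_2\}$ of vertices having the same neighbourhood in $V(H_i)\setminus\{a_1,a_2\}$, so that $\{a_1,a_2\}$ would be a two-vertex module of $H_i$. Since $G$ is prime, $\{a_1,a_2\}$ is not a module of $G$, so there is a vertex $x\in V(G)\setminus V(H_i)$ with, up to symmetry, $xa_1\in E$ and $xa_2\notin E$. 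The rest of the argument is purely local: starting from this seed, one determines the adjacency of $x$ to the remaining vertices of $H_i$ one vertex at a time, at each step using that some five-vertex subset containing $x$ does not induce a dart (which forces a further edge or non-edge at $x$), and—where dart-freeness alone is not conclusive—that some five-vertex subset containing $x$ does not induce a $P_5$. Every branch of the case analysis should end with an explicit induced dart or induced $P_5$, showing that no admissible $x$ exists; hence $\{a_1,a_2\}$ is a module of $G$, contradicting primality, and $G$ is $H_i$-free.

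For two of the three graphs I expect the computation to be short, as for $H_1$ and $H_2$: a handful of ``$\{x,\dots\}$ does not induce a dart'' implications pin down $N(x)\cap V(H_i)$, and then one further such implication exhibits a dart on $\{x\}$ together with four of the named vertices. The role of $P_5$-freeness in these cases should be limited to the branch in which $x$ is so weakly attached to $H_i$ that no dart can be produced; there, an induced path on $x$ and four vertices of $H_i$ gives the contradiction.

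The remaining case (I expect $H_5$) is the one I expect to carry the weight of the proof. Because this $H_i$ is the largest and most symmetric of the three, distinguishing $\{a_1,a_2\}$ by $x$ may leave several pairwise-incomparable possibilities for $N(x)\cap V(H_i)$, some of which are simultaneously dart-free and $P_5$-free on $V(H_i)\cup\{x\}$ alone. To eliminate those one has to look one vertex further: the enlarged configuration on $V(H_i)\cup\{x\}$ again contains a two-vertex set that would be a module, so primality yields a second distinguishing vertex $y$, and one repeats the local analysis on the pair to which $y$ is attached, reusing the adjacencies already forced, until a dart or a $P_5$ finally appears. The \emph{main obstacle} is precisely the bookkeeping here: keeping track of which of the old and new vertices are pairwise adjacent, and checking that each of the several surviving branches really does contain a forbidden induced subgraph rather than merely a larger graph. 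Combining the three cases then shows that $G$ contains no induced $H_3$, no induced $H_4$ and no induced $H_5$, which is the statement of the lemma.
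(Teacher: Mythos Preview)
Your plan is correct and essentially identical to the paper's proof: for each $H_i$ it uses primality to produce a vertex $x$ distinguishing $\{a_1,a_2\}$, runs a local case analysis on $N(x)\cap V(H_i)$ via dart- and $P_5$-freeness, and for $H_5$ invokes primality a second time (on the twin pair $\{b_3,b_4\}$) to get a vertex $y$. One point you should adjust in your expectations: in the $H_5$ case not every branch closes with an explicit dart or $P_5$; the paper disposes of two subcases by exhibiting an induced $H_1$ (forbidden by Lemma~\ref{lem:H1H2-free}) and an induced $H_4$ (forbidden by the part of the present lemma already proved), so the order $H_3,H_4,H_5$ is used and the argument is mildly bootstrapped rather than self-contained branch by branch.
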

\begin{proof} 
Suppose to the contrary that $G$ contains an induced subgraph which is
isomorphic to $H_i$, for $i \in \{3, 4, 5\}$ (as shown in
Figure~\ref{fig:H12345}).  Since $G$ is prime, $\{a_1, a_2\}$ is not a
module, so there exists a vertex $x \in V \setminus V(H_i)$ such that
(up to symmetry) $xa_1 \in E$ and $xa_2 \notin E$.  Then since $\{x,
a_1, b_1, b_2, a_2\}$ does not induce a dart, $x$ has a neighbor in
$\{b_1, b_2\}$.

Suppose that $i =3$.  By symmetry, we may assume that $xb_1 \in
E$.  Since $\{a_1, x, b_1, a_2, a_3\}$ does not induce a dart, $xa_3
\in E$.  Since $\{a_2, a_1, x, a_3, a_4\}$ does not induce a $P_5$,
$xa_4 \in E$.  Since $\{b_1, x, a_4, a_3, b_2\}$ does not induce a
dart, $xb_2 \in E$.  But, then $\{b_1, a_1, b_2, x, a_4\}$ induces a
dart.  So, $G$ is $H_3$-free.

Suppose that $i =4$.  First suppose that $xb_1\in E$.  Since $\{x,
a_1, a_2, b_1, a_3\}$ and $\{x, a_1, a_2, b_1, a_4\}$ do not induce a
dart, we have $xa_3 \in E$ and $xa_4 \in E$.  But, then $\{x, a_3,
b_1, a_4, a_2\}$ induces a dart.  So, $xb_1 \notin E$, and hence $xb_2
\in E$.  Since $\{x, a_1, a_2, b_2, a_3\}$ does not induce a dart,
$xa_3 \in E$, and since $\{x, b_2, a_2, b_1, a_4\}$ does not induce a
$P_5$, $xa_4 \in E$.  But, then $\{a_1, b_2, a_3, x, a_4\}$ induces a
dart.  So, $G$ is $H_4$-free.

Finally, suppose that $i =5$.  First suppose that $xb_2 \in E$.  Since
$\{x, a_1, a_2, b_2, b_3\}$ and $\{x, a_1, a_2, b_2, b_4\}$ do not
induce a dart, we have $xb_3 \in E$ and $xb_4 \in E$.  But, then $\{x,
b_2, b_3, b_4, a_2\}$ induces a dart.  So, $xb_2 \notin E$, and hence
$xb_1 \in E$.  Since $\{x, b_1, a_2, b_2, b_3\}$ and $\{x, b_1, a_2,
b_2, b_4\}$ do not induce a $P_5$, we have $xb_3 \in E$ and $xb_4 \in
E$.  Again, since $G$ is prime, $\{b_3, b_4\}$ is not a module in $G$,
so there exists a vertex $y \in V \setminus V(H_5)$ such that (up to
symmetry) $yb_3 \in E$ and $yb_4 \notin E$.  Note that $y \neq x$.
Suppose that $yb_2 \notin E$.  If $ya_1, ya_2 \in E$, then $\{y, a_1,
a_2, b_2, b_3, b_4\}$ induces an $H_4$, and if $ya_1 \notin E$, then
since $\{y, b_3, b_2, a_1, b_1\}$ does not induce a $P_5$, $yb_1 \in
E$.  But, then $\{b_4, b_2, a_1, b_1, y\}$ induces a $P_5$ (and
similar proof holds when $ya_2 \notin E$).  So, $yb_2 \in E$.  Then
$ya_1, ya_2 \notin E$ (otherwise, $\{y, a_1, b_2, b_3, b_4\}$ or $\{y,
a_2, b_2, b_3, b_4\}$ induces a dart).  Then since $\{x, y, a_1, b_2,
b_3, b_4\}$ does not induce an $H_1$, we have $xy \notin E$.  Then
since $\{y, b_2, a_2, b_1, x\}$ does not induce a $P_5$, $yb_1 \in E$.
But, then $\{x, a_1, a_2, b_1, y\}$ induces a dart.  So, $G$ is
$H_5$-free.  

This shows Lemma~\ref{lem:H3H4-free}.
\end{proof}

By Lemmas~\ref{lem:H1H2-free} and~\ref{lem:H3H4-free} prime ($P_5$,
dart)-free graphs are $(H_1, H_2, H_3, H_4, H_5)$-free; see
Figure~\ref{fig:H12345}.

\begin{theorem} \label{thm:P5D-free-str}
Let $G$ be a prime ($P_5$, dart)-free graph that contains an induced
$C_5$.  Then either $|V(G)| \leq 18$ or $G$ is $O_3$-free.
\end{theorem}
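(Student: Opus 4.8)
The plan is to fix an induced $C_5$, say on vertices $c_1,\dots,c_5$ (indices mod $5$, with $c_ic_{i+1}\in E$), and analyze how the rest of the graph attaches to it. For each vertex $v\in V(G)\setminus\{c_1,\dots,c_5\}$, the neighborhood $N(v)\cap\{c_1,\dots,c_5\}$ is one of a small number of ``types'' up to rotation/reflection of the $C_5$; the $(P_5,\mathrm{dart})$-free condition, together with the already-established absence of $H_1,\dots,H_5$, should rule out most types and severely constrain the interactions between vertices of the surviving types. Concretely, I would first classify the possible types: a vertex can be anticomplete to the $C_5$, complete to it, adjacent to exactly one $c_i$, to two consecutive $c_i,c_{i+1}$, to two non-consecutive $c_i,c_{i+2}$, to three vertices, or to four vertices. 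Forbidding $P_5$ kills (or tightly restricts) the ``one neighbor'' and certain ``two consecutive'' types, since a vertex with a single neighbor $c_i$ tends to extend a path through the cycle; forbidding the dart kills several of the ``two'' and ``three'' types directly (a dart lives easily on a $C_5$ plus one extra vertex with the wrong attachment). This should leave only a handful of admissible types — I expect roughly: anticomplete, complete, the ``$P_3$-type'' adjacent to three consecutive vertices $c_{i-1},c_i,c_{i+1}$, and possibly a ``$\overline{P_3}$-type'' adjacent to $c_i,c_{i+1},c_{i+3},c_{i+4}$ (i.e.\ all but one) or the ``two non-consecutive'' type.

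Next I would bound, for each admissible nontrivial type $t$, the size of the set $V_t$ of vertices of that type, and control edges within $V_t$ and between $V_t$ and $V_{t'}$. The key tool is that $G$ is prime: if too many vertices share the same attachment to the $C_5$ and behave identically toward each other and toward everything else, they form a nontrivial module. More precisely, within a fixed type class the $C_5$ already ``separates'' its members from much of the graph, so any local homogeneity quickly yields a module; the $H_i$-free conditions (Lemmas~\ref{lem:H1H2-free}, \ref{lem:H3H4-free}) are exactly what is needed to forbid the configurations of two or three same-type or adjacent-type vertices that would otherwise occur, thereby forcing each type class to be small — a constant number of vertices. Summing the constant bounds over the constantly many admissible types, plus the $5$ cycle vertices, gives an absolute bound $|V(G)|\le 18$. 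If instead some admissible nontrivial type is \emph{empty} for all rotations — in particular if no vertex is complete to the $C_5$ and no triad-creating attachment survives — then I must show $G$ is $O_3$-free: here I would argue that any triad $\{x,y,z\}$ in $G$, together with the $C_5$, produces a forbidden induced $P_5$ or dart (using that each of $x,y,z$ attaches to the $C_5$ in one of the few surviving ways, and a triad among them plus two well-chosen cycle vertices realizes a $P_5$ or a dart), unless $|V(G)|$ was already small.

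The main obstacle I anticipate is the case analysis over attachment types and, especially, bounding the interaction between two \emph{different} surviving type classes: ruling out a long induced path or a dart that uses one vertex of type $t$, one of type $t'$, and three cycle vertices requires care, and it is here that the precise shapes of $H_3,H_4,H_5$ (paths/near-paths of length $5$–$6$ attached in specific ways) must be invoked to get a contradiction rather than merely a structural restriction. A secondary subtlety is organizing the rotational symmetry of the $C_5$ so the bookkeeping stays finite and the constant $18$ comes out cleanly; I would handle this by fixing one ``base'' orientation of each type and checking that two vertices of the same base type cannot coexist (else a module or an $H_i$), which collapses most classes to size $\le 1$ or $\le 2$ per rotation and makes the final count routine.
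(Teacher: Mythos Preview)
Your high-level strategy---classify vertices by their attachment to the $C_5$, use $(P_5,\text{dart})$-freeness and the $H_i$-freeness lemmas to restrict the admissible types, then split into a ``bounded'' case and an ``$O_3$-free'' case---is exactly the paper's approach. However, you have the dichotomy backwards, and this is a genuine gap.

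You propose to bound \emph{every} surviving type class to a constant and sum to $18$. This cannot work: the types $X_i$ (neighbors $\{c_{i-1},c_i,c_{i+1}\}$), $Z_i$ (neighbors $C\setminus\{c_i\}$), and $T$ (complete to $C$) are each forced to be \emph{cliques} (two non-adjacent members plus three cycle vertices give a dart), so they can be arbitrarily large and no module argument will cap them. The correct split is the opposite of what you wrote: the $O_3$-free outcome occurs precisely when $X\cup Z\cup T\neq\emptyset$, not when these sets are empty. The mechanism is that a single vertex in $X\cup Z\cup T$ (via dart/$P_5$/$H_i$ arguments) kills the ``sparse'' types---the two-nonadjacent type $W_i$ (neighbors $\{c_{i-1},c_{i+1}\}$), the mixed three-type $Y_i$ (neighbors $\{c_i,c_{i+1},c_{i-2}\}$, which you omitted from your list), and the anticomplete set $R$---leaving $V(G)=C\cup X\cup Z\cup T$, and one then checks directly that this union is $O_3$-free. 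Conversely, when $X\cup Z\cup T=\emptyset$, the remaining sparse types \emph{are} boundable: $|W_i|\le 2$ (here the primality/module idea and the $H_i$-lemmas are used), $|Y|\le 2$, $|R|\le 1$, giving $5+10+2+1=18$.

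So the missing idea is not a bookkeeping issue but a structural one: the presence of a ``dense'' attachment type is what forces $O_3$-freeness (by expelling the sparse types), and the absolute size bound arises only in the complementary regime where just the sparse types survive.
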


\begin{proof} 
Consider an induced $C_5$ in $G$, with vertex-set $C := \{v_1, v_2,
v_3, v_4, v_5\}$ and edge-set $\{v_iv_{i+1}|i \in [4]\} \cup
\{v_5v_1\}$.  We associate the following notation with $G$, taking the
indices modulo $5$:
\begin{eqnarray*}
W_i &:=& \{x \in V \setminus C \mid N_C(x) = \{v_{i-1}, v_{i+1}\}\},\\
X_i &:=& \{x \in V \setminus C \mid N_C(x) = \{v_{i-1}, v_i, v_{i+1}\}\},\\
Y_i &:=& \{x \in V \setminus C \mid N_C(x) = \{v_{i}, v_{i+1}, v_{i-2}\}\},\\
Z_i &:=& \{x \in V \setminus C \mid N_C(x) = V(C) \sm \{v_{i}\}\},\\
T_{\ } &:=& \{x \in V \setminus C \mid N_C(x) = V(C)\},\\
R_{\ } &:=& \{x \in V \setminus C \mid N_C(x) = \emptyset\}.
\end{eqnarray*}

\no Let $W := \cup_{i = 1}^5 W_i$, $X := \cup_{i = 1}^5 X_i$, $Y :=
\cup_{i = 1}^5 Y_i$, and $Z := \cup_{i = 1}^5 Z_i$.

\medskip

\no For every $i\in [5]$ (mod $5$) the following properties hold true.
\begin{enumerate}
\item[(1)]If $x \in V\sm C$ and if $N_C(x) \neq \es$, then $x \in W
\cup X \cup Y \cup Z \cup T$, as $G$ is $P_5$-free.  \item[(2)] $W_i$
is an independent set.

\item[(3)]
\begin{enumerate}
\item $X_i$ and $Z_{i}$ are cliques.
\item $X_i$ is complete to $X_{i+1}$.
\item $X_i$ is complete to $Z_{i-2} \cup Z_{i+2}$.
\item If $X \neq \es$, then $W\cup Y = \es$.
\end{enumerate}

\item[(4)]
\begin{enumerate}
\item $|Y_i| \leq 1$.
\item If $Y_i \neq \es$, then $Y_{i+1} = \es = Y_{i-1}$.
\item $|Y| \leq 2$.
\item $Y_i$ is anticomplete to $Y_{i+2}$.
\end{enumerate}

\item[(5)]
\begin{enumerate}
\item If $Z \neq \es$, then $W=\es$.

\item If $Z_i \neq \es$, for some $i$,  then $Y \sm Y_{i+2} = \es$.

\item $Z_i$ is anticomplete to $Y_{i+2}$.

\end{enumerate}

\item[(6)]
\begin{enumerate}
\item If $T \neq \es$, then $W\cup Y = \es$.
\item $[T]$ is $O_3$-free.
\end{enumerate}

\item[(7)]
\begin{enumerate}
\item If $x \in W \cup X \cup Z \cup T$, then $N(x) \cap R = \es$.
So, if $R \neq \es$, then since $G$ is connected, $Y \neq \es$.  \item
Let $x \in Y_i$ and $y \in Y_{i+2}$.  Then $x$ and $y$ have the same
neighbors in $R$.  \item $|R| \leq 1$.  \item If $Z_i \neq \es$, for
some $i$, then $R = \es$.
\end{enumerate}

\end{enumerate}

\no{\it Proof of properties $(1)$--$(7)$.} Some of the above properties
can be verified routinely and in that case we omit their proof;
however, we do give a proof for those properties which are not
trivial.

\no (2): If there are adjacent vertices $x$ and $y$ in $W_i$, then
$\{x, y, v_i, v_{i+1}, v_{i-1}, v_{i+2}\}$ induces an $H_4$, which
contradicts Lemma~\ref{lem:H3H4-free}.

\no (3:a): If there are non-adjacent vertices $x$ and $y$ in $X_i$ or
in $Z_i$, then $\{x, y, v_i, v_{i+1}, v_{i+2}\}$ induces a dart.

\no (3:b): If there are non-adjacent vertices $x \in X_i$ and $y\in
X_{i+1}$, then $\{x, v_{i-1}, v_{i-2}, v_{i+2}, y\}$ induces a $P_5$.

\no (3:c): If there are non-adjacent vertices $x \in X_i$ and $y\in
Z_{i+2} \cup Z_{i-2}$, say, by symmetry, $y \in Z_{i+2}$, then $\{x,
y, v_{i}, v_{i+1}, v_{i+2}\}$ induces a dart.

\no (3:d): Suppose not.  Let $x \in X_1 \subseteq X$ and let $w \in W
\cup Y$.  Then up to symmetry we have the following cases.  If $w \in
W_1$, then $\{x, w, v_1, v_2, v_3, v_5\}$ induces a dart or an $H_4$.
If $w \in W_2$, then $\{x, w, v_1, v_2, v_3, v_4\}$ induces an $H_3$
(if $xw \in E$), or $\{x, v_1, v_2, v_5, w\}$ induces a dart (if $xw
\notin E$).  If $w \in W_3$, then $\{x, w, v_1, v_2, v_3\}$ induces a
dart (if $xw \in E$), or $\{x, v_1, v_2, v_3, v_5, w\}$ induces a
$H_5$ (if $xw \notin E$).  If $w \in Y_1$, then $\{x, w, v_1, v_3,
v_4, v_5\}$ induces an $H_3$ (if $xw \in E$), or $\{x, w, v_1, v_2,
v_3\}$ induces a dart (if $xw \notin E$).  If $w \in Y_2$, then $\{x,
w, v_1, v_2, v_4 , v_5\}$ induces a dart or an $H_4$.  If $w \in Y_3$,
then $\{x, w, v_1, v_2,$ $ v_4, v_5\}$ induces a dart or $H_2$.  So,
(3:d) holds.

\no (4:a): Suppose not.  Then there are two vertices $x$ and $y$ in
$Y_i$.  Now, if $xy \in E$, then $\{x, y, v_{i+1}, v_{i+2},$ $
v_{i-2}, v_{i-1}\}$ induces an $H_4$, and if $xy \notin E$, then $\{x,
y, v_i, v_{i+1}, v_{i+2}\}$ induces a dart.  So, (4:a) holds.

\no (4:b): Suppose not.  Then there are two vertices $x \in Y_i$ and
$y \in Y_{i+1} \cup Y_{i-1}$, say, up to symmetry, $y \in Y_{i+1}$.
Now, if $xy \in E$, then $\{x, v_i, v_{i+1}, y, v_{i-2}\}$ induces a
dart, and if $xy \notin E$, then $\{v_i, x, v_{i-2}, v_{i+2}, y\}$
induces a $P_5$.  So, (4:b) holds.

\no (4:c): The proof follows from (4:a) and (4:b).

\no (4:d): If there are adjacent vertices $x \in Y_i$ and $y \in
Y_{i+2}$, then $\{x, y, v_{i+2}, v_{i-2}, v_{i-1}\}$ induces a dart.

\no (5:a): Suppose not.  Let $z \in Z_1$ and let $w \in W$.  Up to
symmetry, we have the following cases.  If $w \in W_1$, then $\{z, w,
v_{i+2}, v_{i+1}, v_i\}$ induces a dart (if $zw \in E$), or
$\{v_{i+2}, v_{i-2}, z, v_{i-1}, v_i, w\}$ induces an $H_5$ (if $zw
\notin E$).  If $w \in W_2$, then $\{z, w, v_{i}, v_{i+1}, v_{i+2},
v_{i-2}\}$ induces an $H_2$ (if $zw \in E$), or $\{z, v_{i+1}, v_{i+2},
v_{i-2}, w\}$ induces a dart (if $zw \notin E$).  If $w\in W_3$, then
$\{z, w, v_{i+1}, v_{i+2}, v_i\}$ induces a dart (if $zw \in E$), or
$\{z, v_{i+2}, v_{i-2}, v_{i-1}, w\}$ induces a dart (if $zw \notin
E$).  So, (5:a) holds.

\no (5:b): Suppose not.  Let $z \in Z_1$ and let $y \in Y \sm Y_3$.
Then up to symmetry we have the following cases.  If $y \in Y_1$, then
$\{z, y, v_i, v_{i+2}, v_{i-2}, v_{i-1}\}$ induces an $H_2$ (if $zy \in
E$), or $\{z, v_{i+2}, v_{i-2}, v_{i-1}, y\}$ induces a dart (if $zy
\notin E$).  If $y \in Y_2$, then $\{z, y, v_{i-1}, v_{i-2}, v_{i}\}$
induces a dart (if $zy \in E$), or $\{z, y, v_{i+1}, v_{i+2}, v_{i}\}$
induces a dart (if $zy \notin E$).  Thus, (5:b) holds.

\no (5:c): If there are adjacent vertices $z \in Z_i$ and $y \in
Y_{i+2}$, then $\{z, y, v_{i-2}, v_{i-1}, v_{i+1}\}$ induces a dart.

\no (6:a): Suppose not.  Let $t \in T$.  Now, if $w \in W_1$, then
$\{t, w, v_1, v_2, v_3\}$ or $\{t, w, v_1, v_2, v_4\}$ induces a dart,
and if $y \in Y_1$, then $\{t, y, v_3, v_4, v_5\}$ or $\{t, y, v_2,
v_3, v_5\}$ induces a dart.  Since the other cases are symmetric,
these contradictions show that (6:a) holds.

\no (6:b): Suppose to the contrary that $[T]$ contains a triad, say $S
: = \{a, b, c\}$.  Consider the co-connected component of $[T]$
containing $S$.  Since $G$ is prime, $\{a, b\}$ is not a module in
$G$, so there exists a vertex $x$ in $X \cup Z$ (by (6:a)) that is
adjacent to $a$ and not to $b$.  Since $x \in X\cup Z$, there exist
$i, j \in [5]$ such that $v_iv_j \in E$, $xv_i \in E$ and $xv_j \notin
E$.  Then since $\{x, a, b, c, v_i\}$ does not induce a dart, $xc
\notin E$.  But, then $\{x, b, c, v_i, v_j\}$ induces a dart.  So,
(6:b) holds.

\no (7:a): Suppose not.  Let $x \in W \cup X \cup Z \cup T$ be such
that $N(x) \cap R \neq \es$.  Let $y \in N(x) \cap R$.  Now, if $x \in
X \cup Z \cup T$, then there exists $i \in [5]$ such that $\{v_i,
v_{i+1}, v_{i+2}\} \subseteq N(x)$.  But, then $\{v_i, v_{i+1},
v_{i+2}, x, y\}$ induces a dart.  So, $x \in W$, say $x \in W_1$.
Then $\{y, x, v_2, v_3, v_4\}$ induces a $P_5$.

\no (7:b): Suppose not.  Then up to symmetry, let $r\in R$ be such
that $rx \in E$ and $ry \notin E$.  But, then since $xy \notin E$ (by
(4:d)), $\{r, x, v_{i+1}, v_{i+2}, y\}$ induces a $P_5$.

\no (7:c): Since $G$ is prime, the proof follows from (7:a) and (7:b).

\no (7:d): Suppose not.  Let $z \in Z_i$ and $r\in R$.  Then by
(7:a), $zr \notin E$ and $Y \neq \es$, and by (5:b) we obtain $Y \sm
Y_{i+2} = \es$.  Thus, $Y_{i+2} \neq \es$.  Let $y \in Y_{i+2}$ be
such that $yr \in E$.  Then, by (5:c), $zy \notin E$.  But, then
$\{v_{i-1}, z, v_{i+2}, y, r\}$ induces a $P_5$.

\medskip

Moreover, the following holds.
\begin{enumerate}
\item[(8)] For every $i \in [5]$, we have $|W_i| \leq 2$. So, $|W| \leq 10$.
\end{enumerate}
Proof: We prove for $i =1$.  Suppose to the contrary that $|W_1| \geq
3$.  Then by $(2)$, there exist three mutually non-adjacent vertices
in $W_1$, say $w_1$, $w_2$ and $w_3$.  Let $G'$ be the graph induced
by $V(C) \cup \{w_1, w_2, w_3\}$, and let $W' := \{v_1, w_1, w_2,
w_3\}$.  Since $G$ is prime, $V \sm V(G') \neq \es$.  Then we have the
following claim.

\begin{claim}\label{clm:pr1}
 Let $x \in V\sm V(G')$.  Suppose that $x$ has a neighbor and a
 non-neighbor in $W'$. Then: (i) $x$ has exactly one neighbor in
 $\{v_2, v_5\}$, and (ii) $|N(x) \cap W'| = 1$.
 \end{claim}

\no{\it Proof of Claim~$\ref{clm:pr1}$}.  We may assume, up to
symmetry, that $xw_1 \in E$ and $xw_2 \notin E$.

($i$):  If $x$ has no neighbor in $\{v_2, v_5\}$,
then since $\{w_2, v_2, w_1, x, v_4\}$ does not induce a $P_5$, $xv_4
\notin E$.  Similarly, since $\{w_2, v_5, w_1, x, v_3\}$ does not
induce a $P_5$, $xv_3 \notin E$.  Then $\{x, w_1, v_2, v_3,
v_4\}$ induces a $P_5$ in $G$, a contradiction.  If $x$ is adjacent to
both $v_2$ and $v_5$, then since $\{x, w_1, v_2, v_3, w_2\}$ does not
induce a dart, $xv_3 \notin E$.  But then $\{x, v_2, v_3, v_5, w_1,
w_2\}$ induces an $H_4$, which contradicts Lemma~\ref{lem:H3H4-free}.
So $(i)$ holds.

($ii$): By our assumption, $w_1 \in N(x) \cap W'$.  We show that
$N(x) \cap W' = \{w_1\}$.  Suppose not.  Up to symmetry, we may assume
that $xw_3 \in E$.  By ($i$), $x$ has exactly one neighbor in $\{v_2,
v_5\}$, say, by symmetry, $xv_2 \in E$.  Now, $\{x, v_2, w_1, w_3,
w_2\}$ induces a dart.  So, $(ii)$ holds.  This shows
Claim~\ref{clm:pr1}.

\smallskip

Since $G$ is prime, $W'$ is not a module.  So, by
Claim~\ref{clm:pr1}(ii) and by symmetry, there are vertices $x$, $y$,
$z$ in $V\sm V(G')$ such that $N(x) \cap W' = \{v_1\}$, $N(y) \cap W'
= \{w_1\}$, and $N(z) \cap W' = \{w_2\}$.  Then by
Claim~\ref{clm:pr1}(i), two of the vertices in $\{x, y, z\}$ have the
same neighbor in $\{v_2, v_5\}$.  Up to symmetry, let $x$ and $y$ have
the same neighbor $v_2$.  Again by Claim~\ref{clm:pr1}(i), $xv_5
\notin E$ and $yv_5 \notin E$.  Then since $\{x, y, v_1, v_2, w_3\}$
does not induce a dart, $xy \notin E$.  Also, since the subgraph
induced by $\{x, y, v_1, v_2, v_3, w_1, w_3\}$ does not contain a
dart, $xv_3 \notin E$ and $yv_3 \notin E$.  Then since the subgraph
induced by $\{x, y, v_1, v_2, v_3, v_4, w_1\}$ does not contain a
$P_5$, we have $xv_4 \in E$ and $yv_4 \in E$.  But then $\{v_1, x,
v_4, y, w_1\}$ induces a $P_5$.  Thus, (8) holds.

\medskip

Now, we claim that:
\begin{enumerate}
\item[(9)] $[C\cup X \cup Z]$ is $O_3$-free.
\end{enumerate}
Proof: Suppose to the contrary that $[C\cup X \cup Z]$ contains a
triad, say $S : = \{a, b, c\}$.  By the definitions of $X$ and
$Z$, the set $S$ has at most one vertex from $C$.  If $v_1 (:= a) \in S$
(say), then $b$ and $c$ belong to $X_3 \cup X_4 \cup Z_1$, which is
impossible, by (3).  So, suppose that none of the vertices
from $V(C)$ belongs to $S$.  Then by using $(3)$, we have the
following cases (the other cases are either similar or symmetric): 
(i)~$a \in X_1$, $b \in X_3\cup Z_1$ and $c \in Z_2$. Then $\{a, v_1,
c, v_3, b\}$ induces a $P_5$. (ii)  $a \in X_1$, $b \in Z_2$ and $c
\in Z_5$. Then $\{v_3, v_4, b, c, v_1, a\}$ induces an $H_3$. (iii)
$a \in Z_1$, $b \in Z_2$ and $c \in Z_3 \cup Z_4$. Then $\{a, b, c,
v_1, v_5\}$ or $\{a, b, c, v_4, v_5\}$ induces a dart.  Thus
(9) holds.

\medskip

Now, if $X\cup Z \cup T = \es$, then by properties (4:c) and (7:d) and
by (8), it follows that $|V| = |C \cup W \cup Y \cup R| \leq 10+5+2+1
= 18$.  Therefore we may assume that $X\cup Z \cup T \neq \es$.

Suppose that $T \neq \es$.  Then we show that $G$ is $O_3$-free.
Suppose to the contrary that $G$ contains a triad $S:= \{a, b,
c\}$.  Since $T \neq \es$, we have $W = Y= \es$ by $(6)$.  So, $R =
\es$ by (7:a).  Also, since $[C\cup X \cup Z]$ is $O_3$-free, by
(9), at least one vertex from $T$ is in $S$, and hence none of the
vertices from $C$ belong to $S$.  Since $[T]$ is $O_3$-free, by the
above properties, we have the following cases (the other cases are
symmetric): \\
(i) $a \in X_1$, $b \in X_3$ and $c \in T$.  Then $\{a, v_1, c, v_3,
b\}$ induces a $P_5$.\\
(ii) $a \in X_1$, $b \in T$ and $c \in T$.  Then $\{a, b, c, v_2,
v_3\}$ induces a dart. \\ 
(iii)  $a \in Z_1$, $b \in Z_2\cup Z_3 \cup T$ and $c \in T$. Then
$\{a, b, c, v_1, v_5\}$ induces a dart.  \\
(iv) $a\in X_1$, $b\in Z_1\cup Z_2$, and $c\in T$. Then 
$\{a,b,c,v_4,v_5\}$ induces a dart. \\
These contradictions show that $G$ is $O_3$-free.  Therefore we may
assume that $T = \es$.  If $X \neq \es$, then since $W= Y = \es$, by
(3:d), and $R = \es$, by (7:a), it follows from (9) that $G$ is
$O_3$-free.  Therefore we may assume that $T \cup X= \es$.  Thus, $Z
\neq \es$, say $Z_1\neq\emptyset$.  Then by (5:a) and (5:b), $W= \es$
and $Y\sm Y_3 = \es$.  Also, by (7:d), $R = \es$.  If $Y_3 = \es$,
then by (9), $G$ is $O_3$-free.  So, suppose that $Y_3 \neq \es$.
Again, by (5:a), $Z\sm Z_1 = \es$.  Then since $Z_1$ is a clique, and
any vertex in $Y_3$ is adjacent to $v_1$, it follows that $G$ is
$O_3$-free.  This completes the proof of
Theorem~\ref{thm:P5D-free-str}.
\end{proof}

Finally we can prove the main result of this section.
\begin{theorem}\label{thm:P5D-free-compl}
The \textsc{Weighted Vertex Coloring} problem can be solved in
polynomial time in the class of ($P_5$, dart)-free graphs.
\end{theorem}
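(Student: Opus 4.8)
The plan is to reduce to prime graphs via Theorem~\ref{thm:ML-1} and then split the analysis according to whether an induced $C_5$ is present, feeding each case into a structural result already established in this section. So suppose $G$ is a prime $(P_5,\text{dart})$-free graph; by Theorem~\ref{thm:ML-1} it suffices to solve the WVC problem for such $G$ in polynomial time. First I would test, by examining all five-element subsets of $V(G)$, whether $G$ contains an induced $C_5$; this takes time $O(|V(G)|^5)$.

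If $G$ contains no induced $C_5$, then $G$ is $(P_5, C_5,\text{dart})$-free, and Theorem~\ref{thm:P5C5D-free-compl} already solves the WVC problem for $G$ in polynomial time (that theorem is stated for the whole class, so primeness is not even needed in this branch).

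If $G$ does contain an induced $C_5$, I would invoke Theorem~\ref{thm:P5D-free-str}: either $|V(G)|\le 18$, or $G$ is $O_3$-free. In the former case the instance has bounded size and $\chi_w(G)$ is computed directly, in time polynomial in the weights (there are only finitely many graphs on at most $18$ vertices, and on a fixed graph the weighted chromatic number is the optimum of a covering problem over the constant-size family of maximal stable sets). In the latter case, the WVC problem for $O_3$-free graphs is polynomial-time solvable by \cite{ML}. Either way we obtain $\chi_w(G)$ in polynomial time. Combining the two branches, and invoking Theorem~\ref{thm:ML-1} once more to pass from prime graphs to the whole hereditary class, yields the claimed polynomial-time algorithm for all $(P_5,\text{dart})$-free graphs.

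The heavy lifting has already been done: the only nontrivial ingredient is the structural dichotomy of Theorem~\ref{thm:P5D-free-str} (resting on Lemmas~\ref{lem:H1H2-free} and~\ref{lem:H3H4-free}), and once it is in hand the assembly above is routine; there is no real obstacle left at this stage. The one point deserving a word of care is the bounded-size case, where one must remember that the input size of a WVC instance includes the weights — but on a fixed finite graph the problem is still polynomial, since such a graph has only a bounded number of (maximal) stable sets and the corresponding integer covering program has a bounded number of variables.
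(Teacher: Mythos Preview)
Your proposal is correct and follows essentially the same route as the paper: reduce to prime graphs, branch on the presence of an induced $C_5$, and in each branch invoke Theorem~\ref{thm:P5C5D-free-compl} or Theorem~\ref{thm:P5D-free-str} together with the known polynomial algorithms for $O_3$-free graphs and bounded-size instances. The paper's proof is terser (it does not spell out the $C_5$ test or the appeal to Theorem~\ref{thm:ML-1}), but the underlying assembly is the same.
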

\begin{proof}
Since the class of perfect graphs can be recognized in polynomial time
\cite{perf-rec}, and the WVC problem can be solved in polynomial time
for perfect graphs \cite{GLS}, for $O_3$-free graphs \cite{ML}, and
for the graphs having at most $c$ vertices (for any fixed $c$), the
theorem follows from Theorems~\ref{thm:P5C5D-free-compl}
and~\ref{thm:P5D-free-str}.
\end{proof}


\section{WVC for ($P_5$, banner)-free graphs}

The main result of this section is the following.
\begin{theorem}\label{thm:pbamain}
The \textsc{Weighted Vertex Coloring} problem can be solved in
polynomial time in the class of ($P_5$, banner)-free graphs.
\end{theorem}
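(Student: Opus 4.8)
The plan is to reuse, almost verbatim, the template of Section~3: prove that every prime $(P_5,\mathrm{banner})$-free graph is either perfect, or $O_3$-free, or has bounded size, and then conclude with the polynomial-time algorithms for perfect graphs \cite{GLS}, for $O_3$-free graphs \cite{ML}, and brute force on bounded-size graphs, together with Theorem~\ref{thm:ML-1} (and, where convenient, Theorem~\ref{thm:ML-2}) to discharge the primality assumption. Accordingly I would split into the $C_5$-free case and the case of a prime graph containing an induced $C_5$.

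For the $C_5$-free case: a $(P_5,C_5)$-free graph has no hole at all, since every hole of length $\ge 6$ contains an induced $P_5$; in particular it has no odd hole, and no odd anti-hole of length $5$ (which is $C_5$). So by the SPGT a prime such graph is perfect unless it contains an odd anti-hole of length $\ge 7$, in which case its complement is a prime $(\mathrm{house},\overline{C_5},\mathrm{co\text{-}banner})$-free graph with an odd hole of length $\ge 7$. I would therefore establish the banner analogue of Theorem~\ref{thm:hcd}: every prime $(\mathrm{house},\mathrm{co\text{-}banner})$-free graph that contains an odd hole of length $\ge 7$ is triangle-free, whence the original graph is $O_3$-free. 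The argument copies the proof of Theorem~\ref{thm:hcd}: blow the hole up to inclusion-maximal sets $A_1,\dots,A_\ell$, let $B$ be the set of vertices complete to $A=\bigcup_iA_i$, show each $A_i$ and each $N_A(v)$ (for $v\notin A\cup B$) is stable, eliminate $B$ by primality, and finish with a shortest-path argument from $A$ to a hypothetical triangle. Here the forbidden co-dart configurations get replaced by the corresponding forbidden co-banner and house configurations: two adjacent vertices $u,v\in A_i$ give the co-banner $\{u,v,a_{i+1},a_{i+2},a_{i+3}\}$, and if $v$ is adjacent to two consecutive $a$'s then, picking a further neighbour $a_{i+3}$ one way or the other, either the co-banner $\{v,a_i,a_{i+1},a_{i+2},a_{i+3}\}$ or the house $\{v,a_i,a_{i+1},a_{i+2},a_{i+3}\}$ appears, so $v$ is forced into $B$.

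For a prime $(P_5,\mathrm{banner})$-free graph $G$ containing an induced $C_5$ on $C=\{v_1,\dots,v_5\}$, I would introduce the same partition of $V(G)\setminus C$ into $W_i,X_i,Y_i,Z_i,T,R$ as in Theorem~\ref{thm:P5D-free-str}. Banner-freeness behaves much more restrictively around a $C_5$ than dart-freeness, essentially because a banner is $C_4$-plus-a-degree-one-vertex: for $x\in W_1$ the set $\{v_1,v_2,x,v_5\}$ induces a $C_4$ in which $v_3$ has exactly one neighbour, so $\{v_1,v_2,x,v_5,v_3\}$ is a banner; hence $W=\emptyset$ outright. Similarly two non-adjacent vertices of the same $X_i$ (or $Z_i$) together with two consecutive vertices of $C$ give a $C_4$ with a pendant and are forbidden, and the various cross-edges between the $X_i$'s, $Y_i$'s and $Z_i$'s produce analogous banners. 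Feeding these constraints back, together with $P_5$-freeness (which, as in Theorem~\ref{thm:P5D-free-str}, confines every vertex with a neighbour on $C$ to $X\cup Y\cup Z\cup T$ and collapses numerous ``induced-path'' relations), I expect to bound $|X|,|Y|,|Z|,|R|$ and the number of triads of $[T]$ by absolute constants, and to force $[C\cup X\cup Z]$ and $[T]$ to be $O_3$-free, yielding the same dichotomy: either $|V(G)|$ is at most a fixed constant, or $G$ is $O_3$-free.

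The main obstacle is precisely this last case analysis around the $C_5$. Although banner-freeness kills the $W_i$'s at once, the remaining interactions among $X$, $Y$, $Z$, $T$ and $R$ must be re-derived from scratch: the list of forbidden local configurations is different from the dart case, and the delicate part is to squeeze out a clean ``bounded size or $O_3$-free'' conclusion rather than a structure theorem with extra tractable-but-unwieldy building blocks. A secondary, smaller difficulty is getting the co-banner analogue of Theorem~\ref{thm:hcd} exactly right: co-banner, like co-dart, contains a triangle, but it is not a single ``$K_3$ plus pendant plus $K_1$'' — so pushing a vertex with two consecutive hole-neighbours into $B$ requires combining co-banner-freeness with house-freeness rather than appealing to one forbidden subgraph, and one must check that the shortest-path endgame still goes through with the two extra forbidden configurations.
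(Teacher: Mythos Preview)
Your plan is workable, but it is considerably more elaborate than what the paper does.  You follow the dart template from Section~3: split into the $C_5$-free and the $C_5$-containing cases, handle the former in the complement via a ``Theorem~\ref{thm:hcd} for hammers'' restricted to odd holes of length $\ge 7$, and attack the latter with the $W_i,X_i,Y_i,Z_i,T,R$ partition machinery of Theorem~\ref{thm:P5D-free-str}.  The paper instead proves a single structural lemma (Theorem~\ref{thm:pba2}): \emph{every prime (house, hammer)-free graph is either perfect or triangle-free}.  This is established by the same blow-up/shortest-path argument you sketch for long odd holes, but carried out for holes of \emph{any} odd length $\ell\ge 5$, including $\ell=5$; the point is that for the hammer (unlike the co-dart) the length-$5$ case goes through without extra work.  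Taking complements, every prime $(P_5,\mathrm{banner})$-free graph is perfect or $O_3$-free, and Theorem~\ref{thm:ML-1} finishes the job.

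So your co-banner analogue of Theorem~\ref{thm:hcd} is exactly the right idea; you simply stopped it at $\ell\ge 7$ when in fact it also covers $\ell=5$.  Once that is observed, the entire $W_i,X_i,\dots$ analysis becomes unnecessary: if the prime graph contains an induced $C_5$, then its complement is a prime (house, hammer)-free graph that is not perfect, hence already triangle-free by Theorem~\ref{thm:pba2}, hence $G$ is $O_3$-free outright --- no ``bounded-size'' alternative ever arises.  Your route would eventually reach the same conclusion, but at the cost of replaying a long Theorem~\ref{thm:P5D-free-str}-style case analysis in a setting where it is not needed.
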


We first establish a structure theorem for the complement graph of a
($P_5$, banner)-free graph.  The complement of a banner is called a
\emph{hammer}.  See Figure~\ref{fig:sg}.

\begin{theorem}\label{thm:pba2}
Let $G$ be any prime (hammer, house)-free graph.  Then $G$ is either
perfect or triangle-free.
\end{theorem}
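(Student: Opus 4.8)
The plan is to use the Strong Perfect Graph Theorem and prove the contrapositive form: if $G$ is prime, (hammer, house)-free and \emph{not} perfect, then $G$ is triangle-free. The first step reduces non-perfectness to the presence of an odd hole. Since $C_\ell$ contains an induced $P_5$ for every $\ell\ge 7$, its complement $\overline{C_\ell}$ contains an induced $\overline{P_5}$, that is a house; so a house-free graph has no odd antihole of length at least $7$, and as $C_5$ is self-complementary, a non-perfect $G$ must contain an odd hole, of some length $\ell\ge 5$. From here I follow the method of Theorem~\ref{thm:hcd}: choose pairwise disjoint nonempty sets $A_1,\dots,A_\ell$ (indices mod $\ell$) with $A_i$ complete to $A_{i+1}$ and no other edges among $A:=\bigcup_i A_i$, with $A$ inclusionwise maximal, and let $B$ be the set of vertices outside $A$ that are complete to $A$.

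The next three facts are: (i) each $A_i$ is stable; (ii) $G[A]$ is triangle-free; (iii) for every $v\in V\setminus(A\cup B)$ the set $N_A(v)$ is stable. For (i), an edge $uv$ inside $A_i$ gives, for any $a_{i+1}\in A_{i+1}$, $a_{i+2}\in A_{i+2}$, $a_{i+3}\in A_{i+3}$, a triangle $\{u,v,a_{i+1}\}$ with a pendant path of two edges $a_{i+1}$--$a_{i+2}$--$a_{i+3}$, i.e.\ an induced hammer (this works already for $\ell=5$, where $a_{i+3}=a_{i-2}$); then (ii) follows since the only edges of $G[A]$ lie between consecutive stable sets. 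Fact (iii) is the crucial lemma, and I would prove it like this: if $v$ has neighbors $a_1\in A_1$ and $a_2\in A_2$ but $v\not\sim a_3$ for some $a_3\in A_3$, then for any $a_4\in A_4$ the set $\{v,a_1,a_2,a_3,a_4\}$ induces a hammer when $v\not\sim a_4$ and a house when $v\sim a_4$ (here one uses $a_1a_4,a_2a_4\notin E$, valid for all $\ell\ge 5$); hence $v$ would be complete to $A_3$, and then, propagating this around the hole, complete to all of $A$, i.e.\ $v\in B$, a contradiction — so no two neighbors of $v$ in $A$ are adjacent.

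Now $G$ is connected (otherwise the component containing $A$ is a proper homogeneous set), and $B=\emptyset$: for $b\in B$ and $v\in V\setminus(A\cup B)$, using that the index set of the neighbors of $v$ in $A$ is independent in the odd cycle $C_\ell$ (by (iii)) together with a suitable five-vertex configuration built from $b$, $v$, a (non-)neighbor of $v$ in $A$, and consecutive hole representatives, one shows $bv\in E$ unless a house or a hammer appears — the disconnectedness of the co-dart exploited in Theorem~\ref{thm:hcd} has to be replaced by a more careful selection since both forbidden graphs here are connected; then $B$ is complete to $V\setminus B$, making $V\setminus B$ a proper homogeneous set, contradicting primality. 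Once $B=\emptyset$, fact (iii) also shows no triangle has two vertices in $A$ (the third vertex would have two adjacent neighbors in $A$). Finally, suppose for contradiction that $G$ has a triangle; choose a triangle $T=\{u,v,w\}$ with $v,w\notin A$ and a shortest path $P=p_1\cdots p_k$ from $A$ to $u=p_k$, with $k$ minimum over all triangles. If $k\ge 3$, then $\{p_{k-2},p_{k-1},u,v,w\}$ induces a hammer (shortestness of $P$ forces $p_{k-1},p_{k-2}$ to be nonadjacent to $v$ and $w$). For $k=2$ and $k=1$ I would run the same case analysis as in Theorem~\ref{thm:hcd} with "hammer"/"house" in place of "co-dart": in the $k=1$ case, for a triangle $\{a_i,v,w\}$ with $a_i\in A$, fact (iii) makes $v,w$ anticomplete to $A_{i-1}\cup A_{i+1}$, so each $a_{i+2}\in A_{i+2}$ must have a neighbor in $\{v,w\}$ (else a hammer), exactly one neighbor yields a house, and two neighbors yield a new triangle $\{a_{i+2},v,w\}$; rotating around the odd cycle and using parity gives a contradiction, and the $k=2$ case is analogous with one extra path vertex.

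The main obstacle is precisely the $k=1,2$ analysis together with the $B=\emptyset$ step: in each configuration one has to choose the five vertices so that the induced subgraph is exactly a hammer or a house, and not some five-vertex graph that is \emph{not} forbidden here (a bull, a co-dart, or a disconnected graph), which is delicate because — unlike the co-dart used in Theorem~\ref{thm:hcd} — neither the hammer nor the house is disconnected. A secondary obstacle is that the present statement also covers $\ell=5$ (whereas Theorem~\ref{thm:hcd} only treats $\ell\ge 7$), so several configurations must be re-verified for $C_5$, where vertices three apart on the hole are still nonadjacent but fewer "far" vertices are available to build the forbidden subgraphs.
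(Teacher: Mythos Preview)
Your outline is the paper's proof: SPGT reduces non-perfectness to an odd hole (antiholes of length $\ge 6$ contain a house), a maximal blown-up hole $A=A_1\cup\cdots\cup A_\ell$ with its universal set $B$ is set up, each $A_i$ is stable and $N_A(v)$ is stable for $v\notin A\cup B$ via the same hammer/house configurations, $B=\emptyset$, and triangles are ruled out by a shortest-path argument with cases on $k$.

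Two refinements from the paper fill exactly the obstacles you flag. For $B=\emptyset$: your five-vertex configuration requires a neighbor of $v$ in $A$, so it says nothing about vertices $v$ with $N_A(v)=\emptyset$, and ``$B$ complete to $V\setminus B$'' does not follow as stated. The paper instead takes the component $H$ of $G\setminus B$ containing $A$, picks $b\in B$ with a non-neighbor $x\in V(H)$ (such a pair exists since $V(H)$ is not a homogeneous set), minimizes the length of a shortest $A$--$x$ path $p_1\cdots p_k$ in $H$, shows $bp_2\in E$ by your configuration $\{p_2,a_1,b,a_3,a_4\}$, and then obtains the hammer $\{p_{j+1},p_j,b,a_2,a_3\}$ at the first edge where $b$ drops off --- this is the path technique you already invoke for the triangle step, transplanted here. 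For $k=1$: no rotation is needed. Choosing representatives so that $v$ is adjacent to $a_1=u$ and nonadjacent to $a_2,a_3$ (possible since $N_A(v)$ is stable and $\ell$ is odd), the set $\{u,v,w,a_2,a_3\}$ is directly a hammer or a house according to whether $w\sim a_3$, already for $\ell=5$. The $k=2$ case is equally short: for any $a'\in A$ adjacent to $p_1$ one has $u=p_2\not\sim a'$ by fact~(iii), and $\{v,w,u,p_1,a'\}$ is a hammer, a house, or yields a triangle $\{v,w,a'\}$ contradicting the minimality of $k$.
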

\begin{proof}
Let $G$ be a prime (house, hammer)-free graph, and suppose that $G$ is
not perfect.  By the Strong Perfect Graph Theorem $G$ contains an odd
hole or an odd antihole of length at least~$5$.  However, every
antihole of length at least $6$ contains a house.  So $G$ contains a
hole, of length $\ell\ge 5$.  It follows that there exist $\ell$
non-empty and pairwise disjoint subsets $A_1, \ldots, A_\ell$ of
$V(G)$ such that, for each $i$ modulo~$\ell$, the set $A_i$ is
complete to $A_{i+1}$, and there are no other edges between any two of
these sets.  Let $A=A_1\cup\cdots\cup A_\ell$.  We choose these sets
so that $A$ is inclusionwise maximal.  Let $B$ be the set of vertices
of $V(G)\setminus A$ that are complete to $A$.  We first claim that:
\begin{equation}\label{hhais}
\mbox{Each $A_i$ is a stable set.}
\end{equation}
Proof: Suppose that $u$ and $v$ are two adjacent vertices in $A_i$.
Pick an arbitrary vertex $a_j\in A_j$ for each $j\in\{i+1, i+2,
i+3\}$.  Then $\{u,v,a_{i+1}, a_{i+2},a_{i+3}\}$ induces a hammer.
Thus (\ref{hhais}) holds.

\medskip

Now we claim that:
\begin{equation}\label{hhn}
\mbox{For any vertex $v\in V(G)\setminus (A\cup B)$, the set
$N_A(v)$ is a stable set.}
\end{equation}
Proof: Suppose the contrary.  By (\ref{hhais}) there is an integer $i$
such that $v$ has a neighbor $a_i\in A_i$ and a neighbor $a_{i+1}\in
A_{i+1}$.  Consider arbitrary vertices $a_{i+2}\in A_{i+2}$ and
$a_{i+3}\in A_{i+3}$.  Then $v$ is adjacent to $a_{i+2}$, for
otherwise $\{v, a_i, a_{i+1}, a_{i+2}, a_{i+3}\}$ induces a hammer or
a house (depending on the adjacency between $v$ and $a_{i+3}$).  Now
we can repeat this argument with $i+1$ and so on, which implies that
$v\in B$, a contradiction.  Thus (\ref{hhn}) holds.

\medskip

Now we claim that:
\begin{equation}\label{hhbem}
\mbox{$B=\emptyset$.}
\end{equation}
Proof: Suppose that $B\neq\emptyset$.  Let $H$ be the component of
$G\setminus B$ that contains~$A$.  By the hypothesis, $V(H)$ is not a
proper homogeneous set, which implies that there exist non-adjacent
vertices $b\in B$ and $x\in V(H)$.  By the definition of $H$ there is
a shortest path $p_1$-$\cdots$-$p_k$ in $H$ with $p_1\in A$ and
$p_k=x$, and we choose the pair $b,x$ so as to minimize $k$.  We have
$k\ge 2$ since $x\notin A$.  We can pick vertices $a_i\in A_i$ for
each $i\in\{1,\ldots,\ell\}$ so that $p_2$ has a neighbor in the set
$\{a_1,...,a_\ell\}$.  Since $\ell$ is odd, and by (\ref{hhn}), $p_2$
has two consecutive non-neighbors in that set, so, up to relabeling,
we may assume that $p_2$ is adjacent to $a_1$ and not adjacent to
$a_2$ and $a_3$.  Then $b$ is adjacent to $p_2$, for otherwise $\{p_2,
a_1, b, a_3, a_4\}$ induces a hammer or a house (depending on the
adjacency between $p_2$ and $a_4$).  Hence there is an integer $j\le
k$ such that $b$ is adjacent to $p_j$ and not to $p_{j+1}$.  But then
$\{p_{j+1}, p_j, b, a_2, a_3\}$ induces a hammer.  Thus (\ref{hhbem})
holds.

\medskip

To finish the proof of the theorem, suppose on the contrary that $G$
contains a triangle $T=\{u,v,w\}$.  By~(\ref{hhais}) the graph $G[A]$
is triangle-free.  Moreover, by~(\ref{hhn}), no triangle of $G$ has
two vertices in $A$.  So $T$ contains at most one vertex from $A$.
Note that $G$ is connected, for otherwise the vertex-set of the
component that contains $A$ would be a proper homogeneous set and not
a stable set.  So there is a shortest path $P$ from $A$ to $T$.  Let
$P=p_1$-$\cdots$-$p_k$, with $p_1\in A$, $p_2, \ldots, p_k\in
V(G)\setminus A$, $p_k=u$, $k\ge 1$, and $v,w\notin A$.  We choose $T$
so as to minimize $k$.  If $k=1$, let $p_2=v$.  We can pick vertices
$a_i\in A_i$ for each $i\in\{1,...,\ell\}$ so that $p_2$ has a
neighbor in the set $\{a_1, ..., a_\ell\}$.  Since $\ell$ is odd, and
by (\ref{hhn}), we may assume that $p_2$ is adjacent to $a_1$ and not
to $a_2$ and $a_3$.  Suppose that $k=1$ (so $u=p_1=a_1$ and $p_2=v$).
Then $\{u,v,w,a_2,a_3\}$ induces a hammer or a house (depending on
$w,a_3$).  Now suppose that $k\ge 2$.  By the minimality of $k$, the
vertices $v,w$ have no neighbor in $\{p_1, ..., p_{k-1}\}$.  Suppose
that $k=2$.  If any of $v,w$ is adjacent to $a_2$, then both are, for
otherwise $\{v,w,u,p_1,a_2\}$ induces a house; but then $\{v,w,a_2\}$
is a triangle, so we should have $k=1$.  Hence $k\ge 3$.  But then
$\{v,w,u,p_{k-1},p_{k-2}\}$ induces a hammer.  This completes the
proof of the theorem.
\end{proof}

\no {\it Proof of Theorem~\ref{thm:pbamain}}.  Since the class of
perfect graphs can be recognized in polynomial time \cite{perf-rec},
and since the WVC problem can be solved in polynomial time for perfect
graphs \cite{GLS} and for $O_3$-free graphs \cite{ML}, the theorem
follows from Theorems~\ref{thm:ML-1} and~\ref{thm:pba2}.
\hfill{$\Box$}

\section{WVC for ($P_5$, bull)-free graphs}\label{sec:p5bull}

In this section, we show the following result.  This was mentioned as 
an open problem in \cite{CamHoa}.
\begin{theorem}\label{thm:pbumain}
The \textsc{Weighted Vertex Coloring} problem can be solved in
polynomial time in the class of ($P_5$, bull)-free graphs.
\end{theorem}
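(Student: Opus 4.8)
The plan is to follow exactly the template that has worked for the previous three classes in this paper: reduce to prime graphs via Theorem~\ref{thm:ML-1}, and then show that a prime $(P_5,\text{bull})$-free graph is either perfect or belongs to some class for which WVC is already known to be polynomial. The natural companion to $P_5$-free here is the complement: a graph is bull-free if and only if its complement is bull-free (the bull is self-complementary), so if $G$ is $(P_5,\text{bull})$-free then $\overline G$ is $(\overline{P_5},\text{bull}) = (\text{house},\text{bull})$-free. Thus I would first prove a structure theorem of the form: \emph{every prime $(\text{house},\text{bull})$-free graph is either perfect or triangle-free} (equivalently, every prime $(P_5,\text{bull})$-free graph is either perfect or $O_3$-free). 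Given such a theorem, Theorem~\ref{thm:pbumain} follows immediately: recognize perfection in polynomial time \cite{perf-rec}, solve WVC on perfect graphs \cite{GLS} or on $O_3$-free graphs \cite{ML}, and lift to non-prime graphs by Theorem~\ref{thm:ML-1}, exactly as in the proofs of Theorems~\ref{thm:P5C5D-free-compl} and \ref{thm:pbamain}.

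To prove the structure theorem I would mimic the argument of Theorem~\ref{thm:pba2} (and Theorem~\ref{thm:hcd}). Assume $G$ is prime and (house, bull)-free but not perfect; by the SPGT $G$ contains an odd hole or odd anti-hole of length $\ge5$. Since every anti-hole of length $\ge6$ contains a house, $G$ must contain a hole of length $\ell\ge5$. Expand this hole to a maximal ``blown-up'' cycle: pairwise disjoint nonempty sets $A_1,\dots,A_\ell$ with $A_i$ complete to $A_{i+1}$ and no other edges among them, chosen with $A=\bigcup A_i$ inclusionwise maximal; let $B$ be the vertices of $V(G)\setminus A$ complete to $A$. Then I would establish, in sequence: (1) each $A_i$ is stable --- two adjacent vertices in $A_i$ together with suitable representatives $a_{i+1},a_{i+2},\ldots$ from nearby sets induce a bull or a house; (2) for $v\in V(G)\setminus(A\cup B)$, $N_A(v)$ is stable --- otherwise, pushing the adjacency around the odd cycle as in \eqref{hhn} forces $v\in B$; (3) $B=\emptyset$ --- using a shortest path from $A$ to a non-neighbor of some $b\in B$ together with primeness, as in \eqref{hhbem}; and finally (4) assuming a triangle $T$ exists, take a shortest path from $A$ to $T$ and derive a house or a bull along it, contradiction. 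Each of these steps is a short case analysis on how a vertex attaches to three or four consecutive $A_i$'s, and the small number of five-vertex configurations (bull, house) to avoid keeps the casework bounded.

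The main obstacle I expect is step (4), the triangle-elimination step, and more generally making sure the forbidden configurations are really forced at each stage. The bull has a different ``shape'' than the hammer or co-dart used in the earlier proofs --- it is a triangle with two pendant edges at two distinct vertices --- so the precise choice of which representatives $a_j$ to pick (indices $i+1,i+2,i+3$ vs.\ $i+1,i+2,i+4$, etc., as one sees differing between Theorems~\ref{thm:hcd} and \ref{thm:pba2}) has to be tuned to the bull; an ill-chosen index set will give a graph that is neither a bull nor a house and the argument stalls. I would also need to be careful that the path arguments in steps (3) and (4) handle the short cases $k=1$ and $k=2$ separately, since these are where houses (rather than bulls) typically appear, and to exploit primeness correctly to rule out proper homogeneous sets such as $V(G)\setminus B$ or the vertex set of a connected component. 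Once the bookkeeping of consecutive-neighborhood patterns is set up correctly, the argument should close exactly as in Theorem~\ref{thm:pba2}.

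\begin{proof}
This is a plan; the detailed proof is carried out below along the lines just described.
\end{proof}
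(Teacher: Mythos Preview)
Your high-level plan is right and your target structure theorem (prime (house, bull)-free $\Rightarrow$ perfect or triangle-free) would indeed suffice for Theorem~\ref{thm:pbumain}. However, the paper does something genuinely different, and two of your proposed steps would not go through as written.

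The paper's Theorem~\ref{thm:pbu2} is the stronger statement ``$(P_5,C_5)$-free or triangle-free'', obtained by starting from a $P_5$ \emph{or} $C_5$ (so always five sets $A_1,\ldots,A_5$, with $A_5$ either complete or anticomplete to $A_1$), not from an odd hole of arbitrary length~$\ell$. Consequently the paper cites the algorithm of \cite{CHMW} for $(P_5,C_5,\text{house})$-free graphs rather than the perfect-graph machinery. Your weaker dichotomy does follow from theirs (since $(P_5,C_5,\text{house})$-free graphs are perfect by SPGT), but the proofs are organized differently.

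More importantly, the bull does not behave like the hammer or the co-dart in the two steps you highlight. For your step~(1), there is \emph{no} choice of nearby representatives that turns two adjacent vertices $u,v\in A_i$ into a bull or a house: e.g.\ $\{u,v,a_{i+1},a_{i+2},a_{i+3}\}$ is a hammer, and $\{u,v,a_{i-1},a_{i+1},a_{i+2}\}$ has degree sequence $(1,2,3,3,3)$, which is neither. The paper therefore proves ``$A_i$ stable'' \emph{last}, via a homogeneous-set argument using primeness, after the analogue of your step~(2) is already in hand. For your step~(2), the ``push around the cycle'' stalls: from $v$ adjacent to $a_i,a_{i+1}$ one only gets a bull on $\{a_{i-1},a_i,v,a_{i+1},a_{i+2}\}$, forcing $v$ adjacent to $a_{i-1}$ \emph{or} $a_{i+2}$, and the argument cannot iterate. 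The paper's substitute is to use the \emph{maximality of $A$}: once $v$ is shown to be complete to three consecutive sets and anticomplete to the others, it is absorbed (e.g.\ ``the sets $A_1,A_2\cup\{v\},A_3,A_4,A_5$ contradict the maximality of $A$''). This absorption trick, absent from Theorems~\ref{thm:hcd} and~\ref{thm:pba2}, is the missing ingredient in your plan, and it also forces the reordering of claims (their (\ref{no4}), (\ref{no2}), (\ref{bem}), then (\ref{ais})).
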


We first establish a structure theorem for the complement graph of a
($P_5$, bull)-free graph.  Note that the bull is a self-complementary
graph.
\begin{theorem}\label{thm:pbu2}
Let $G$ be any prime (house, bull)-free graph.  Then $G$ is either
$(P_5, C_5)$-free or triangle-free.
\end{theorem}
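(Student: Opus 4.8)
\textbf{Proof plan for Theorem~\ref{thm:pbu2}.}
The argument should run closely parallel to the proof of Theorem~\ref{thm:pba2}, since the bull and the hammer both contain a triangle with two pendant-like attachments, and the house appears again as the forbidden ``short'' obstruction. Assume $G$ is prime, (house, bull)-free, and not triangle-free; the goal is to show $G$ is $(P_5,C_5)$-free. By the Strong Perfect Graph Theorem, if $G$ is not perfect it contains an odd hole or an odd antihole of length at least~$5$; every antihole of length at least~$6$ contains a house, so either $G$ is $C_5$-free and perfect (hence trivially $(P_5,C_5)$-free once we also rule out induced $P_5$), or $G$ contains a hole of length $\ell\ge 5$. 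So the plan is: first suppose $G$ contains a hole $A_1$-$\cdots$-$A_\ell$ of length $\ell\ge 5$ (blown up into non-empty pairwise-disjoint stable-or-not sets), chosen with $A=\bigcup A_i$ inclusionwise maximal, and derive a contradiction with the assumption that $G$ has a triangle, exactly as in Theorem~\ref{thm:pba2}. Concretely, I would re-derive the three claims: (a) each $A_i$ is stable (an edge in $A_i$ together with three vertices from $A_{i+1},A_{i+2},A_{i+3}$ should induce a bull or a house); (b) for $v\notin A\cup B$, $N_A(v)$ is stable (a neighbor in $A_i$ and $A_{i+1}$ forces, via bull/house avoidance, $v$ to be complete to $A_{i+2}$, and iterating puts $v\in B$); (c) $B=\emptyset$ (using primeness and a shortest-path argument: a nonneighbor of some $b\in B$ in the component of $G\setminus B$ containing $A$ yields a bull or house).

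Once the blown-up hole is excluded, the remaining case is that $G$ is perfect (no hole, no odd antihole), and we must show a prime perfect (house, bull)-free $G$ is $P_5$-free. Here I would argue directly: suppose $P=p_1$-$p_2$-$p_3$-$p_4$-$p_5$ is an induced $P_5$. Since $G$ is prime, no pair $\{p_i,p_j\}$ is a module, and more usefully I'd look for a vertex distinguishing the ends or the middle of $P$. The house is $\overline{P_5}$, so forbidding the house in $G$ means $G$ has no induced $\overline{P_5}$; combined with $P_5$ itself being present, and bull-freeness, the interactions of any extra vertex $x$ with $V(P)$ are highly constrained. A cleaner route may be to note that $(P_5,\text{bull})$-free perfect graphs, or rather their complements, fall under already-understood classes — but since the theorem is stated for the complement side, I expect the intended proof simply runs the hole analysis above and then observes that the leftover perfect case is handled by the same $P_5/\overline{P_5}$/bull local analysis, or is even vacuous because a prime perfect graph that is house-free and contains $P_5$ can be shown not to exist by a short case check on a distinguishing vertex for $\{p_1,p_2\}$ or $\{p_4,p_5\}$.

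\textbf{Main obstacle.} The delicate part is the triangle-elimination finale in the hole case: after establishing (a)--(c), one takes a triangle $T=\{u,v,w\}$ with at most one vertex in $A$, chooses a shortest path $P=p_1$-$\cdots$-$p_k$ from $A$ to $T$ minimizing $k$, and must rule out $k=1$, $k=2$, and $k\ge 3$. For $k\ge 3$ the five vertices $\{v,w,u,p_{k-1},p_{k-2}\}$ should induce a bull (the triangle $uvw$ attached to a path), for $k=1$ a small configuration with $a_2,a_3$ gives a bull or house, and $k=2$ requires the extra wrinkle (as in Theorem~\ref{thm:pba2}) that if one of $v,w$ sees $a_2$ then both do, forcing a triangle at distance $1$ and contradicting minimality of $k$. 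I would need to double-check that the bull (rather than hammer) genuinely appears in each of these configurations — the bull has a triangle with two distinct pendant vertices attached to two distinct triangle-vertices, whereas the hammer attaches both pendants to the same vertex, so the path geometry here ($u$ on the triangle, $p_{k-1},p_{k-2}$ forming a pendant path) is exactly a bull-type attachment, which is reassuring. The other point needing care is confirming that the perfect leftover case truly reduces to $P_5$-freeness and that this follows from house- and bull-freeness plus primeness via an elementary distinguishing-vertex argument, rather than needing a separate structural input.
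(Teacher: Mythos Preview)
Your plan mirrors Theorem~\ref{thm:pba2} too closely, and that is where it breaks. In Theorem~\ref{thm:pba2} the forbidden pair is (house, hammer); here it is (house, bull). Several of the local configurations you rely on produce a \emph{hammer}, not a bull, and therefore give no contradiction. Concretely: for claim~(a), an edge $uv$ in $A_i$ together with $a_{i+1},a_{i+2},a_{i+3}$ yields a triangle $\{u,v,a_{i+1}\}$ with a pendant path $a_{i+1}$-$a_{i+2}$-$a_{i+3}$, which is a hammer. For claim~(b), if $v$ sees $a_i,a_{i+1}$ but not $a_{i+2}$ and not $a_{i+3}$, the same hammer appears, so you cannot force $v$ to see $a_{i+2}$. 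And in the endgame, $\{v,w,u,p_{k-1},p_{k-2}\}$ with the triangle $uvw$ and pendant path $u$-$p_{k-1}$-$p_{k-2}$ is again a hammer (both pendants hang off the same triangle vertex $u$), contrary to what you assert in the last paragraph. These are genuine gaps, not details to be filled in.

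The paper's proof differs from your plan in two essential ways. First, it never considers holes of length $\ell>5$ and never needs a separate ``perfect leftover'' case: since the desired conclusion is $(P_5,C_5)$-free, one simply assumes $G$ contains a $P_5$ or a $C_5$ and takes five sets $A_1,\ldots,A_5$ with $A_5$ either complete or anticomplete to $A_1$, handling both shapes at once. Second, because the hammer is unavailable, the key claims are proved by different mechanisms: the analogue of your (b) is obtained via the \emph{maximality} of $A$ (if $v$ sees two consecutive $A_i$'s one shows $v$ can be absorbed into some $A_j$, a contradiction), the stability of each $A_i$ is obtained via \emph{primeness} (a non-stable $A_i$ would contain a homogeneous component, and a distinguishing vertex yields a bull or house), and the final triangle elimination again uses primeness: one looks at the component $H$ of $G[N(u)]$ containing $v,w$, finds $x,y\in V(H)$ and a distinguishing vertex $z\notin V(H)$ (hence $zu\notin E$), argues $x,y$ miss $p_{k-1}$, and then $\{z,y,x,u,p_{k-1}\}$ is a bull or a house. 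None of these steps is the direct five-vertex check you propose.
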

\begin{proof}
Let $G$ be a prime (house, bull)-free graph, and suppose that $G$
contains a $P_5$ or a $C_5$.  So there exist five non-empty and
pairwise disjoint subsets $A_1, \ldots, A_5$ of $V(G)$ such that the
following properties hold, with subscripts modulo~$5$:
\begin{itemize}
\item
For each $i\in\{1,2,3,4\}$, $A_i$ is complete to $A_{i+1}$.
\item
For each $i\in\{1,2,3,4,5\}$, $A_i$ is anticomplete to $A_{i+2}$.
\item
$A_5$ is either complete or anticomplete to $A_1$.
\end{itemize}
Note that if $A_5$ is complete to $A_1$ the five sets play symmetric
roles.  Let $A=A_1\cup\cdots\cup A_5$.  We choose these sets so that
$A$ is inclusionwise maximal.  Let $B$ be the set of vertices of
$V(G)\setminus A$ that are complete to $A$.  We first claim that:
\begin{equation}\label{no4}
\longbox{For any vertex $v\in V(G)\setminus (A\cup B)$ and any
$i\in\{1,\ldots, 5\}$, $v$ is anticomplete to at least one of $A_i$,
$A_{i+1}$, $A_{i+2}$, $A_{i+3}$.}
\end{equation}
Proof: For each $i\in\{1,...,5\}$, let $a_i$ be a neighbor of $v$ in
$A_i$ (if any) and let $z_i$ be a non-neighbor of $v$ in $A_i$ (if
any).  Suppose that $v$ has neighbors in four sets $A_i$, $A_{i+1}$,
$A_{i+2}$, $A_{i+3}$.  Up to symmetry we may assume that
$i\in\{1,3,4\}$.  If $i=1$, then $v$ is complete to $A_5$, for
otherwise $\{a_1,v,a_3,a_4,z_5\}$ induces either a house or a bull
(depending on the adjacency between $a_1$ and $z_5$).  If $i=3$, then
$v$ is complete to $A_2$, for otherwise $\{a_1,z_2,a_3,a_4,v\}$
induces a house.  If $i=4$, then $v$ is complete to $A_3$, for
otherwise $\{a_1,a_2,z_3,a_4,v\}$ induces a house.  In all cases $v$
is complete to $A_{i-1}$, so $v$ has neighbors in all five sets.
Repeating this argument with each $i$ we obtain that $v\in B$, a
contradiction.  Thus (\ref{no4}) holds.

\medskip

Now we claim that:
\begin{equation}\label{no2}
\longbox{For any vertex $v\in V(G)\setminus (A\cup B)$ and any
$i\in\{1,\ldots, 4\}$, $v$ is anticomplete to at least one of $A_i$
and $A_{i+1}$. Also, if $A_5$ is complete to $A_1$, then $v$ is
anticomplete to one of $A_1,A_5$.}
\end{equation}
Proof: For each $i\in\{1,...,5\}$, let $a_i$ be a neighbor of $v$ in
$A_i$ (if any) and let $z_i$ be a non-neighbor of $v$ in $A_i$ (if
any).  Suppose that $v$ has neighbors in two consecutive sets $A_i$
and $A_{i+1}$.  \\
First suppose that $A_5$ is complete to $A_1$.  Up to symmetry, we may
assume that $i=1$.  Then $v$ is complete to $A_5$ or to $A_3$, for
otherwise $\{z_5, a_1,v,a_2, z_3\}$ induces a bull.  By symmetry we
may assume that $v$ is complete to $A_3$; and it follows from
(\ref{no4}) that $v$ has no neighbor in $A_5\cup A_4$.  Moreover $v$
is complete to $A_1$, for otherwise $\{z_1,a_2,v,a_3,z_4\}$ induces a
bull.  But now the sets $A_1, A_2\cup\{v\},A_3,A_4,A_5$ contradict the
maximality of $A$.  \\
Therefore we may assume that $A_5$ is anticomplete to $A_1$.  Up to
symmetry we have $i\in\{1,2\}$.  Suppose that $i=1$.  Suppose that $v$
has a non-neighbor $z_3\in A_3$.  Then $v$ is anticomplete to $A_4$,
for otherwise $\{a_1,a_2,z_3,a_4,v\}$ induces a house; and $v$ is
anticomplete to $A_5$, for otherwise $\{z_3,a_2,a_1,v,a_5\}$ induces a
bull; and $v$ is complete to $A_2$, for otherwise either
$\{v,a_1,z_2,z_3,a_2\}$ induces a house (if $a_2z_2\notin E(G)$) or
$\{v,a_2,z_2,z_3,z_4\}$ induces a bull (if $a_2z_2\in E(G)$).  But now
the sets $A_1\cup\{v\}, A_2,A_3,A_4,A_5$ contradict the maximality of
$A$.  Hence $v$ is complete to $A_3$.  By (\ref{no4}), $v$ has no
neighbor in $A_4\cup A_5$.  Then $v$ is complete to $A_1$, for
otherwise $\{z_1,a_2,v,a_3,z_4\}$ induces a bull.  But now the sets
$A_1,A_2\cup\{v\}, A_3,A_4,A_5$ contradict the maximality of $A$.
Finally suppose that $i=2$.  By the preceding point (the case $i=1$)
we may assume that $v$ is anticomplete to $A_1$.  Then $v$ is complete
to $A_4$, for otherwise $\{z_1,a_2,v,a_3,z_4\}$ induces a bull.  By
(\ref{no4}), $v$ is anticomplete to $A_5$.  By symmetry, $v$ is
complete to $A_2$.  But now the sets $A_1,A_2, A_3\cup\{v\},A_4,A_5$
contradict the maximality of $A$.  Thus (\ref{no2}) holds.

\medskip

Now we claim that:
\begin{equation}\label{bem}
\mbox{$B=\emptyset.$}
\end{equation}
Proof: Suppose that $B\neq\emptyset$.  Let $H$ be the component of
$G\setminus B$ that contains~$A$.  Since $G$ is prime, $V(H)$ is not a
proper homogeneous set, which implies that there exist non-adjacent
vertices $b\in B$ and $x\in V(H)$.  By the definition of $H$ there is
a shortest path $p_1$-$\cdots$-$p_k$ in $H$ with $p_1\in A$ and
$p_k=x$, and we choose the pair $b,x$ so as to minimize $k$.  We have
$k\ge 2$ since $x\notin A$.  We can pick vertices $a_i\in A_i$ for
each $i\in\{1,\ldots,5\}$ so that $p_2$ has a neighbor in
$\{a_1,...,a_5\}$.  We choose three vertices $u,v,w\in\{a_1,...,a_5\}$
so that: (i) $uv$ is the only edge in $G[u,v,w]$, and (ii) $u$ is the
only neighbor of $p_2$ among them; indeed we can find $u,v,w$ as
follows.  If $A_5$ is complete to $A_1$, then by~(\ref{no2}) and
symmetry we may assume that $p_2$ is adjacent to $a_1$ and has no
neighbor in $\{a_2,a_4,a_5\}$, and we set $u=a_1$, $v=a_2$, $w=a_4$.
Suppose that $A_5$ is anticomplete to $A_1$.  If $p_2$ is adjacent to
$a_1$ or $a_2$, let $\{u,v\}=\{a_1,a_2\}$, and let $w$ be a
non-neighbor of $p_2$ in $\{a_4,a_5\}$ ($w$ exists by (\ref{no2})).
The case when $p_2$ is adjacent to $a_5$ or $a_4$ is symmetric.
Finally if the only neighbor of $p_2$ in $\{a_1,...,a_5\}$ is $a_3$,
then let $u=a_3$, $v=a_2$ and $w=a_5$.  In either case, we see that
$b$ is adjacent to $p_2$, for otherwise $\{p_2,u,v,b,w\}$ induces a
bull.  So $k\ge 3$.  By the minimality of $k$, the vertices
$p_3,...,p_k$ have no neighbor in $A$, and $b$ is adjacent to each of
$p_2,...,p_{k-1}$.  Then $\{p_k,p_{k-1},p_{k-2},b,w\}$ induces a bull,
a contradiction.  Thus (\ref{bem}) holds.

\medskip

Now we claim that:
\begin{equation}\label{ais}
\mbox{For each $i\in\{1,...,5\}$,  $A_i$ is a stable set.}
\end{equation}
Proof: Suppose, up to symmetry, that $A_i$ is not a stable set for
some $i\in\{1,2,3\}$.  So $G[A_i]$ has a component $H$ of size at
least $2$.  Since $G$ is prime, $V(H)$ is not a homogeneous set, so
there is a vertex $z\in V(G)\setminus V(H)$ and two vertices $x,y\in
V(H)$ such that $z$ is adjacent to $y$ and not to $x$, and since $H$
is connected we may choose $x$ and $y$ adjacent.  By the definition of
$H$ we have $z\notin A_i$.  Since $z$ is adjacent to $y$ and not to
$x$, we have $z\notin A\cup B$.  Pick any $a'\in A_{i+1}$ and $a''\in
A_{i+2}$.  By (\ref{no2}) and since $z$ has a neighbor in $A_i$, $z$
is not adjacent to $a'$.  Then $\{z,y,x,a',a''\}$ induces a bull or a
house (depending on the adjacency between $z$ and $a''$), a
contradiction.  Thus (\ref{ais}) holds.

\medskip

To finish the proof of the theorem, suppose on the contrary that $G$
contains a triangle $T=\{u,v,w\}$.  By~(\ref{ais}), the graph $G[A]$
is triangle-free.  Moreover, by~(\ref{no2}), no triangle of $G$ has
two vertices in $A$.  So $T$ contains at most one vertex from $A$.
Note that $G$ is connected, for otherwise the vertex-set of the
component that contains $A$ would be a proper homogeneous set.  So
there is a shortest path $P$ from $A$ to $T$.  Let $P=
p_1$-$\cdots$-$p_k$, with $p_1\in A$, $p_2, \ldots, p_k\in
V(G)\setminus A$, $p_k=u$, $k\ge 1$, and $v,w\notin A$.  We choose $T$
so as to minimize $k$.  We can pick vertices $a_i\in A_i$ for each
$i\in\{1,...,5\}$ so that, up to symmetry $p_1=a_i$ for some
$i\in\{1,2,3\}$.  Let $p_0=a_{i+1}$.  Let $U$ be the set of neighbor
of $u$, and let $H$ be the component of $G[U]$ that contains $v$ and
$w$.  Since $V(H)$ is not a homogeneous set, there are vertices
$x,y\in V(H)$ and $z\in V(G)\setminus V(H)$ such that $z$ is adjacent
to $y$ and not to $x$, and since $H$ is connected we may choose such
$x$ and $y$ adjacent.  By the definition of $H$, the vertex $z$ is not
adjacent to $u$.  If $x$ is adjacent to $p_{k-1}$, then either $k=1$
and (\ref{no2}) is violated (because $x$ is adjacent to $p_1$ and
$p_0$), or $k\ge 2$ and $\{p_{k-1},p_k,x\}$ is a triangle that
contradicts the minimality of $k$.  So $x$ is not adjacent to
$p_{k-1}$, and similarly $y$ is not adjacent to $p_{k-1}$.  But then
$\{z,y,x,u,p_{k-1}\}$ induces a bull or a house (depending on the
adjacency between $z$ and $p_{k-1}$), a contradiction.  This completes
the proof of the theorem.
\end{proof}

\no {\it Proof of Theorem~\ref{thm:pbumain}}.  Since the WVC problem
can be solved in polynomial time for ($P_5, C_5$, house)-free graphs
\cite{CHMW}, and for $O_3$-free graphs \cite{ML}, the theorem follows
from Theorems~\ref{thm:ML-1} and~\ref{thm:pbu2}.  \hfill{$\Box$}

\section{WVC for (fork, bull)-free graphs}

In this section, we prove the following result.
\begin{theorem}\label{thm:forkbull}
The \textsc{Weighted Vertex Coloring} problem can be solved in
polynomial time in the class of (fork, bull)-free graphs.
\end{theorem}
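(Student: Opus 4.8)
The plan is to follow the same two-phase strategy that worked for the previous three classes: first reduce to prime graphs via Theorem~\ref{thm:ML-1}, then establish a structure theorem showing that every prime (fork, bull)-free graph falls into a class for which WVC is already known to be polynomial. So I would begin by letting $G$ be a prime (fork, bull)-free graph and asking what happens when $G$ is not perfect. By the Strong Perfect Graph Theorem, $G$ then contains an odd hole or odd antihole. The key first step is to rule out long antiholes: an antihole of length at least~$7$ contains an induced fork (indeed $\overline{C_7}$ already does), so $G$ can only contain $C_5$ or $\overline{C_5}=C_5$ as an odd antihole of length $\ge 5$, i.e. the only obstruction among antiholes is $C_5$. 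Thus a non-perfect prime (fork, bull)-free graph contains either an induced $C_5$ or an odd hole of length at least~$7$.

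Next I would treat the odd-hole case. Suppose $G$ contains a hole $H$ of length $\ell\ge 7$. Using the bull-freeness and fork-freeness I would argue, exactly in the style of Theorems~\ref{thm:hcd} and~\ref{thm:pbu2}, that $G$ must in fact be triangle-free: set up the maximal ``blown-up cycle'' $A_1,\dots,A_\ell$, show each $A_i$ is stable (a non-edge inside $A_i$ together with suitable vertices of $A_{i+2},A_{i+3},\dots$ would create a fork or a bull), show that for $v\notin A\cup B$ the neighborhood $N_A(v)$ is stable and essentially confined to one $A_i$, then show $B=\emptyset$ by primeness, and finally derive a contradiction from any triangle by walking a shortest path from $A$ to the triangle. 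The remaining case is that $G$ contains an induced $C_5$ but no long odd hole or antihole. Here I expect that the $C_5$ can be ``absorbed'': using the same $W_i,X_i,Y_i,Z_i,T,R$ partition as in Theorem~\ref{thm:P5D-free-str} — now with the fork and bull as the forbidden subgraphs — one shows that either $G$ has bounded size or $G$ is $O_3$-free (triangle-free in the complement sense). Actually, since fork-free is a much stronger restriction on how vertices attach to an induced $C_5$ than $P_5$-free (a fork is just $K_{1,3}$ with a pendant edge, so being fork-free severely limits independent neighborhoods), I anticipate the analysis here is if anything cleaner: most of the sets $W,X,Y,Z$ collapse, and one is left with $G$ triangle-free or of size bounded by an absolute constant.

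Combining these, the structure theorem would read: every prime (fork, bull)-free graph is perfect, or triangle-free, or has at most some constant number of vertices. The proof of Theorem~\ref{thm:forkbull} then follows immediately: perfect graphs can be recognized in polynomial time~\cite{perf-rec} and WVC is polynomial on them~\cite{GLS}; WVC is polynomial on $O_3$-free graphs~\cite{ML}; WVC is trivially polynomial on graphs of bounded size; and Theorem~\ref{thm:ML-1} lifts the result from prime graphs to all (fork, bull)-free graphs. The main obstacle I expect is the $C_5$ case — verifying that the interaction between an induced $C_5$ and the rest of a fork-free graph really does force triangle-freeness (or boundedness) requires a careful case analysis of the attachment types, and one must be vigilant that the forbidden graphs are the fork and the bull rather than $P_5$ and the dart, so several of the dart-based arguments from Theorem~\ref{thm:P5D-free-str} must be re-derived using a fork or a bull instead. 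A secondary subtlety is making sure no odd antihole of length exactly~$5$ slips through unnoticed: since $\overline{C_5}\cong C_5$, the $C_5$ case already covers it, but this coincidence should be stated explicitly.
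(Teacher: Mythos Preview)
Your proposal contains a concrete error and, even with that fixed, diverges substantially from the paper's route.

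\textbf{The error.} You claim that antiholes of length at least~$7$ contain an induced fork, and in particular that $\overline{C_7}$ does. This is false. In $\overline{C_n}$ (for any $n\ge 6$) the neighbourhood of a vertex induces $\overline{P_{n-3}}$, whose independence number is~$2$; hence $\overline{C_n}$ is claw-free, and a fortiori fork-free. Since $C_n$ is bull-free and the bull is self-complementary, $\overline{C_n}$ is also bull-free. So every odd antihole is a prime (fork, bull)-free non-perfect graph, and you cannot discard them at the outset. (They happen to be $O_3$-free, so a structure theorem of the form ``perfect or $O_3$-free or bounded'' might still accommodate them---but your stated route to that conclusion is broken, and in your summary you slip into writing ``triangle-free'' rather than ``$O_3$-free'', which is a different and false claim.)

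\textbf{What the paper actually does.} The paper does \emph{not} prove a static structure theorem of the kind you propose. It combines modular decomposition (Theorem~\ref{thm:ML-1}) with clique-cutset decomposition (Theorem~\ref{thm:ML-2}) and then argues as follows. First (Theorem~\ref{thm:fbh}), if a prime (fork, bull)-free graph contains a hole of length~$\ge 6$, then it is either exactly that hole or a bipartite graph; weighted odd holes are coloured directly via the hyperhole formula. Second (Theorem~\ref{thm:fbp}), if there is no such hole but $G$ contains an induced $P_5$ $v_1\text{-}\cdots\text{-}v_5$, then either $G$ has a clique cutset or there is an \emph{optimal} coloring in which $\{v_1,v_3,v_5\}$ is a colour class; one removes this triad and recurses. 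Third, once $G$ is $P_5$-free, the algorithm of Section~\ref{sec:p5bull} for $(P_5,\text{bull})$-free graphs finishes the job. Thus the engine is an iterative colour-class extraction, not a reduction to ``perfect / $O_3$-free / bounded''. Your $C_5$-based partition in the style of Theorem~\ref{thm:P5D-free-str} does not appear, and the long-hole case is much sharper than ``triangle-free'': the graph is literally the hole or bipartite.
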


We need some intermediate results. 
The following lemma is from \cite{CS}.
\begin{lemma}[\cite{CS}]\label{lem:bfnh}
In a bull-free graph $G$, let $B=\{v_1,...,v_\ell\}$ be the vertex-set
of a hole of length $\ell\ge 6$, with edges $v_iv_{i+1}$ for all $i$
modulo~$\ell$.  Then for every vertex $x$ in $V(G)\setminus B$ the set
$N_B(x)$ is either a stable set, or equal to $B$, or equal to
$\{v_{i-1},v_i,v_{i+1}\}$ for some $i$, or equal to
$\{v_{i-1},v_i,v_{i+1}, v_{i+3}\}$ for some $i$ and in this last case
$\ell=6$.
\end{lemma}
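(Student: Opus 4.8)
The plan is to assume that $N_B(x)$ is neither a stable set nor equal to $B$, and to classify it using a single recurring bull configuration. Call the vertices of $N_B(x)$ \emph{black} and the others \emph{white}. The basic tool is the observation that whenever $v_a,v_{a+1}\in N_B(x)$, the three vertices $x,v_a,v_{a+1}$ form a triangle (a hole of length $\ell\ge 6$ is triangle-free, so $v_a$ is non-adjacent to every hole vertex at hole-distance $\ge 2$). To turn such a triangle into a bull it suffices to exhibit two further vertices $p,q$ with $p\not\sim q$, each adjacent to exactly one vertex of the triangle, the two attachment points being distinct. I would always take $p,q$ among the hole vertices and $x$ itself, so that every adjacency is read off from the hole (by chordlessness) and from membership in $N_B(x)$; this reduces each step to a check of hole-distances, which for $\ell\ge 6$ are unambiguous.

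First I would rule out short and long runs of black vertices. Call a maximal set of consecutive black vertices a \emph{segment}. If a segment $\{v_i,v_{i+1}\}$ had length exactly $2$, then by maximality $v_{i-1},v_{i+2}$ are white; since $v_{i-1}\sim v_i$, $v_{i+2}\sim v_{i+1}$, while $v_{i-1},v_{i+2}$ are mutually non-adjacent and each non-adjacent to the far triangle vertex (hole-distance $\ge 2$), the set $\{v_{i-1},v_i,v_{i+1},v_{i+2},x\}$ induces a bull; so no segment has length $2$. Next, if some segment had length $\ge 4$, say $v_i,v_{i+1},v_{i+2},v_{i+3}\in N_B(x)$ with $v_{i-1}$ white (which holds by maximality, as $N_B(x)\ne B$), then $\{v_{i-1},v_i,v_{i+1},x,v_{i+3}\}$ induces a bull: the triangle is $\{x,v_i,v_{i+1}\}$, with $v_{i-1}$ a pendant at $v_i$ and $v_{i+3}$ a pendant at $x$, and $v_{i-1}\not\sim v_{i+3}$ since their hole-distance is at least $2$. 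Hence every segment has length $3$ or $1$, and since $N_B(x)$ is not stable there is at least one segment of length $3$.

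I would then fix a length-$3$ segment $\{v_{i-1},v_i,v_{i+1}\}$; by maximality $v_{i-2},v_{i+2}$ are white. Any further black vertex $v_j$ (in another segment or isolated) is then severely constrained. Applying the basic configuration to the triangle $\{x,v_i,v_{i+1}\}$ with the white pendant $v_{i+2}$ at $v_{i+1}$ and $v_j$ as a pendant at $x$ yields a bull unless $v_j$ is hole-adjacent to one of $v_i,v_{i+1},v_{i+2}$, i.e.\ unless $j=i+3$. Symmetrically, the triangle $\{x,v_{i-1},v_i\}$ with white pendant $v_{i-2}$ forces $j=i-3$. A black $v_j$ avoiding both bulls must satisfy $j\equiv i+3\equiv i-3\pmod\ell$, which forces $\ell\mid 6$, hence $\ell=6$ and $j=i+3=i-3$; in particular there is at most one extra black vertex, and only when $\ell=6$. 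This also subsumes the possibility of a second length-$3$ segment, whose three black vertices cannot all occupy the single admissible position. We thus obtain exactly the two remaining cases: $N_B(x)=\{v_{i-1},v_i,v_{i+1}\}$ in general, and $N_B(x)=\{v_{i-1},v_i,v_{i+1},v_{i+3}\}$ only when $\ell=6$.

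I expect the main obstacle to be the disciplined bookkeeping of adjacencies in the bull tests rather than any single hard idea: in each configuration one must confirm that the two pendants are non-adjacent, that they attach to distinct triangle vertices (or one to $x$), and that no unwanted chord appears, all of which rests on correctly reading hole-distances modulo $\ell$. A secondary point to treat with care is the small-hole boundary: one should note that for $\ell\in\{6,7\}$ two disjoint length-$3$ segments cannot be placed on the cycle once the mandatory white separators are counted, so the uniform coexistence analysis above applies, and one should verify directly that the surviving $\ell=6$ configuration $\{v_{i-1},v_i,v_{i+1},v_{i+3}\}$ is genuinely bull-free, precisely because the would-be pendant $v_{i+2}$ is hole-adjacent to $v_{i+3}$.
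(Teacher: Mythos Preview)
The paper does not prove this lemma; it is quoted from \cite{CS} and used as a black box. Your argument is correct and self-contained: the segment analysis (ruling out runs of length $2$ and $\ge 4$, then pinning any extra black vertex to the unique position $i+3\equiv i-3$) is exactly the natural way to establish the statement, and all the bull configurations you list check out for $\ell\ge 6$. The only superfluous remark is the final sentence about verifying that the $\ell=6$ configuration is ``genuinely bull-free'': the lemma does not assert that such a neighbourhood occurs, only that $N_B(x)$ is confined to the listed shapes, so no such verification is needed.
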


A \emph{wheel} (resp.~a \emph{fan}) is a graph that consists of a hole
$H$ of length at least $6$ (resp.~a path $H$ on $6$ vertices) plus a
vertex that is complete to $V(H)$.  An \emph{umbrella} (resp.~a
\emph{parasol}) is a graph that consists of a hole $H$ on five
vertices (resp.~a path $H$ on five vertices) plus a sixth vertex that
is complete to $V(H)$, and a seventh vertex that is adjacent to the
sixth vertex only.  In a wheel (resp.~fan, umbrella, parasol) the hole
or path $H$ is called the \emph{rim}.  The following lemma summarizes
results from \cite{DMP,KM,MPa,RS}.
\begin{lemma}\label{lem:homosets}
Let $G$ be a bull-free graph that contains as an induced subgraph
either a wheel, or an umbrella, or a parasol, or a fan.  Then $G$ has
a proper homogeneous set that contains the rim of this subgraph.
\end{lemma}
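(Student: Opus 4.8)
The plan is to prove Lemma~\ref{lem:homosets} by treating the four cases (wheel, umbrella, parasol, fan) essentially uniformly, exploiting the strong restriction that Lemma~\ref{lem:bfnh} places on how any outside vertex can attach to a long hole, together with the bull-free constraint on the short-rim configurations. In each case we single out the ``hub'' vertex (the vertex complete to the rim, in the wheel this is the apex; in the umbrella/parasol it is the sixth vertex; in the fan it is the apex complete to the path) and we build a homogeneous set around the rim. First I would fix the induced subgraph and its rim $H$, and set $M$ to be the maximal set of vertices that ``see'' the rim in exactly the way the hub does, i.e. candidates that are complete to $V(H)$ (for the umbrella and parasol one must additionally track the seventh pendant vertex). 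The goal is to show that $M$, suitably defined, is a proper homogeneous set containing $V(H)$, or to enlarge the rim into such a set.

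The key structural step is to analyze, for an arbitrary vertex $x\in V(G)\setminus V(H)$, the trace $N_H(x)=N(x)\cap V(H)$. In the wheel and fan cases the rim is long enough (a hole of length $\ge 6$, or after closing the $P_6$ with the apex a hole of length $6$) that Lemma~\ref{lem:bfnh} applies and forces $N_H(x)$ to be one of a very short list of possibilities: a stable set, all of $V(H)$, a consecutive triple $\{v_{i-1},v_i,v_{i+1}\}$, or the special four-vertex pattern occurring only when $\ell=6$. I would then argue that the only vertices whose attachment is compatible with $H$ remaining induced in the ambient bull-free graph are those complete to $V(H)$ or anticomplete to $V(H)$; any ``mixed'' vertex with a restricted neighborhood would, together with the hub and two or three well-chosen rim vertices, create an induced bull or fork, contradicting the hypothesis. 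This step is the crux: one must check that each of the admissible traces from Lemma~\ref{lem:bfnh}, when combined with the presence of the hub, yields a forbidden induced subgraph unless the vertex is complete (or anticomplete) to the whole rim. The umbrella and parasol cases have short (five-vertex) rims, so Lemma~\ref{lem:bfnh} does not directly apply; there I would instead run a direct bull-freeness argument on the five-cycle or five-path rim plus the hub and pendant vertex, again showing that an outside vertex must be uniform on $V(H)\cup\{\text{hub}\}$ or be separated from it.

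Having shown that every external vertex is either complete or anticomplete to the candidate set, I would then verify that the candidate set is \emph{proper}: it has at least two vertices (the rim already supplies five or six) and it is not all of $V(G)$. Non-triviality at the top follows because in each of the four named subgraphs there is an extra vertex (the pendant seventh vertex in the umbrella/parasol, or a non-adjacency forced by the hole structure) that is demonstrably outside the candidate set, so $M\ne V(G)$; if no such external vertex existed the whole graph would be a blow-up of the rim and the rim itself would serve as a proper homogeneous set. I would finish by observing that the resulting homogeneous set contains $V(H)$ by construction, which is exactly the required conclusion.

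The main obstacle I anticipate is the careful case analysis for \emph{mixed} external vertices, particularly the exceptional four-vertex trace $\{v_{i-1},v_i,v_{i+1},v_{i+3}\}$ permitted by Lemma~\ref{lem:bfnh} when $\ell=6$, and the short-rim umbrella/parasol cases where Lemma~\ref{lem:bfnh} gives no help and one must argue from scratch that the hub and pendant force uniformity. In all these subcases the work is to exhibit, for each forbidden partial attachment, an explicit induced bull or fork on five vertices drawn from the rim, the hub, and the offending external vertex; the bookkeeping of which five vertices form the forbidden graph is routine but must be done exhaustively to be rigorous, and that exhaustive verification is where the real effort lies.
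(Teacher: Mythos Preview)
First, note that the paper does not actually prove this lemma: it is introduced with the sentence ``The following lemma summarizes results from \cite{DMP,KM,MPa,RS}'' and no proof is given. So there is no in-paper argument to compare against; your proposal is an attempt to reprove cited material.

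That said, the proposal has genuine gaps. The lemma assumes only that $G$ is bull-free; you repeatedly invoke an induced \emph{fork} as a contradiction, but forks are not forbidden here (fork-freeness enters only in Theorems~\ref{thm:fbh} and~\ref{thm:fbp}, not in this lemma). More seriously, your central claim---that any vertex with a ``mixed'' attachment to the rim must form a bull with the hub and some rim vertices---is asserted without justification and does not follow easily. The obstruction is that the hub is adjacent to \emph{every} rim vertex, so in any five-vertex set containing the hub and rim vertices the hub has high degree, which makes it hard to produce the two degree-$1$ pendants a bull requires. For example, take a wheel with hub $h$ and a vertex $x$ whose trace on the rim is the consecutive triple $\{v_{i-1},v_i,v_{i+1}\}$ (explicitly permitted by Lemma~\ref{lem:bfnh}); a short check shows that one does not immediately obtain a bull from $\{x,h\}$ together with rim vertices, whether or not $xh\in E(G)$. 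So the rim $V(H)$ need not itself be homogeneous, and the proofs in the cited references do not proceed by showing that it is.

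There is also an inconsistency in your definition of the candidate set $M$: you describe it as the set of vertices complete to $V(H)$ (vertices ``behaving like the hub''), but such a set does not contain $V(H)$, contradicting your later claim that the homogeneous set contains the rim ``by construction.'' The arguments in \cite{DMP,RS} (wheel, umbrella) and \cite{KM,MPa} (parasol, fan) instead grow a larger set containing $V(H)$---absorbing vertices with certain partial attachments---and prove homogeneity of that enlarged set, rather than showing directly that every outside vertex is uniform on $V(H)$.
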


\begin{theorem}\label{thm:fbh}
Let $G$ be a prime (fork, bull)-free graph that contains a hole of
length $\ell\ge 6$.  Then $G$ is either a hole of length $\ell$ or a
bipartite graph.
\end{theorem}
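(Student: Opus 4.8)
The plan is to let $B=\{v_1,\ldots,v_\ell\}$ be the vertex set of a hole of length $\ell\ge 6$ in $G$, chosen with $\ell$ maximum among all holes, and to analyze the ``attachments'' of the remaining vertices to $B$, just as in the proofs of Theorems~\ref{thm:hcd}, \ref{thm:pba2}, and~\ref{thm:pbu2}. First I would invoke Lemma~\ref{lem:bfnh} to classify, for each $x\in V(G)\setminus B$, the possible sets $N_B(x)$: a stable set, all of $B$, a ``clean'' triple $\{v_{i-1},v_i,v_{i+1}\}$, or (only when $\ell=6$) a set $\{v_{i-1},v_i,v_{i+1},v_{i+3}\}$. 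The last two kinds of attachment produce, respectively, a fan and a configuration that Lemma~\ref{lem:homosets} (via the umbrella/parasol/wheel/fan list) forbids in a prime bull-free graph — or more precisely, forces a proper homogeneous set containing the rim $B$, contradicting primeness. So the first reduction is: every vertex outside $B$ either is complete to $B$ or has $N_B(x)$ a stable set. A vertex complete to $B$ together with any three consecutive vertices of $B$ would be a fan's worth of structure (wheel with rim $B$), so by Lemma~\ref{lem:homosets} such a vertex also cannot exist once $G$ is prime; hence every $x\notin B$ has $N_B(x)$ stable.

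**Next I would show** that in fact no vertex outside $B$ has a neighbor in $B$ unless $V(G)=B$. The key point is that a vertex with a stable (but nonempty) neighborhood in a long hole, combined with the hole edges, tends to create an induced fork: if $x$ is adjacent to $v_i$ and nonadjacent to $v_{i\pm1}$, then $\{v_{i-1},v_i,v_{i+1},v_{i+2},x\}$ with $x$ attached only to $v_i$ is precisely a fork (a path $v_{i-1}v_iv_{i+1}v_{i+2}$ plus a pendant at $v_i$), unless $x$ has further neighbors among $v_{i+1},\ldots$ that kill it — but those extra neighbors are themselves nonconsecutive with $v_i$, and a short case check on $\ell\ge 6$ shows one can always isolate a fork or return to the fan/umbrella cases. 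Once no outside vertex attaches to $B$, connectedness of $G$ (which follows from primeness when $|V(G)|\ge 3$, since otherwise the component containing $B$ would be a proper homogeneous set) forces $V(G)=B$, i.e.\ $G$ is the hole of length $\ell$ — \emph{unless} we reach the alternative that will give the bipartite conclusion.

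**The bipartite alternative** should emerge exactly when the hole is even: if $\ell$ is even, then $B$ itself is bipartite, and I would argue that $G$ has no odd hole and no odd antihole, so by the Strong Perfect Graph Theorem — no, rather, more directly: I would show $G$ is triangle-free and that combined with being $P_5$-free... no. Let me instead follow the pattern of the earlier theorems: the cleanest route is to prove $G$ is triangle-free (using that $B$ is triangle-free by construction, no outside vertex sees $B$, and running a shortest-path-from-$B$-to-a-triangle argument that produces a fork or bull exactly as in the proof of Theorem~\ref{thm:pbu2}), and separately that $G$ contains no odd hole and no odd antihole of length $\ge 5$; since a fork-free graph has bounded structure around any odd hole, and bull-free rules out long antiholes, one concludes $G$ is a triangle-free perfect graph, hence bipartite. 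Actually the sharpest statement is: a connected triangle-free graph with a clique cutset decomposition trivial and no odd hole is bipartite; so the heart is showing no odd hole coexists with the even hole $B$ of maximum length — if an odd hole existed it would have length $<\ell$ or interact with $B$ to create a fork.

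**The main obstacle** I anticipate is the case analysis for attachments of outside vertices to $B$ when $\ell=6$, because Lemma~\ref{lem:bfnh} allows the extra attachment type $\{v_{i-1},v_i,v_{i+1},v_{i+3}\}$, and one must carefully verify that every such vertex, together with the hole and possibly a second outside vertex, either creates a fork, or creates one of the four configurations (wheel/umbrella/parasol/fan) handled by Lemma~\ref{lem:homosets}, thereby contradicting primeness. A secondary difficulty is handling the ``$B$ complete to $B'$-type'' maximal-hole-extension bookkeeping cleanly: one wants to choose $\ell$ maximum and simultaneously control a set $B$ of vertices of $V(G)\setminus B$ that are complete to $B$, showing it is empty (this is the analogue of equations~(\ref{hcd3}), (\ref{hhbem}), (\ref{bem}) in the earlier proofs), and the shortest-path trick used there should transfer, with ``hammer/house'' replaced by ``fork/bull''. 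Once those two points are nailed down, the triangle-free-or-hole dichotomy and then the upgrade of ``triangle-free'' to ``bipartite'' via absence of odd holes is routine given the tools already in the excerpt.
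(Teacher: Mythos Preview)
Your plan has two genuine gaps.

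First, a vertex $x$ with $N_B(x)=\{v_{i-1},v_i,v_{i+1}\}$ does \emph{not} create a fan: a fan requires a vertex complete to an induced $P_6$, whereas $x$ sees only three consecutive hole vertices. Lemma~\ref{lem:homosets} therefore does not apply, and ruling out these ``triple'' attachments is the hardest step of the proof. The paper's argument is quite different from anything you sketch: it considers the set $Y$ of all vertices with the same attachment to $B\setminus\{v_i\}$ as $v_i$ has (so $v_i,x\in Y$), takes a component $Z$ of $G[Y]$, uses primeness to find a vertex $w$ that distinguishes two vertices $y,z\in Z$, and then observes that $w$ has a different number of neighbors in the hole $(B\setminus\{v_i\})\cup\{y\}$ than in the hole $(B\setminus\{v_i\})\cup\{z\}$---contradicting the fact, established earlier, that every vertex has exactly $0$, $3$, or $\ell$ neighbors in any $\ell$-hole. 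The $\ell=6$ ``extra'' attachment $\{v_{i-1},v_i,v_{i+1},v_{i+3}\}$ is likewise not handled by Lemma~\ref{lem:homosets}; the paper kills it with a direct fork on $\{v_i,x,v_{i+3},v_{i+2},v_{i+4}\}$.

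Second, your route to the bipartite conclusion is backwards. You try to show that no outside vertex attaches to $B$ and then argue separately (triangle-freeness, SPGT, no odd hole) that $G$ is bipartite. In the paper the bipartite case arises \emph{precisely because} outside vertices \emph{do} attach: when $\ell=6$, a vertex with stable nonempty attachment must, by the very fork argument you identify, be adjacent to every second hole vertex and hence lie in $S_{135}$ or $S_{246}$. After showing $A=T=F=\emptyset$ one has $V(G)=B\cup S_{135}\cup S_{246}$; a short fork argument shows $S_{135}$ and $S_{246}$ are stable, and then $\{v_2,v_4,v_6\}\cup S_{135}$ and $\{v_1,v_3,v_5\}\cup S_{246}$ is an explicit bipartition. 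No SPGT, no shortest-path-to-triangle argument is needed.
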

\begin{proof}
Let $B=\{v_1,...,v_\ell\}$ be the vertex-set of such a cycle, with
edges $v_iv_{i+1}$ for all $i$ modulo~$\ell$.
For any $J\subseteq \{1,...,\ell\}$, let $S_J=\{x\in V(G)\setminus
B\mid$ $N(x)\cap B=\{v_j\mid j\in J\}\}$ (and we will write, for
example, $S_{123}$ instead of $S_{\{1,2,3\}}$).  Let
$A=S_{1,2,...,\ell}$, and $T=\bigcup_{i=1}^{\ell} S_{i-1,i,i+1}$, and
$F=S_{\emptyset}$.  We claim that:
\begin{equation}\label{7bafs}
\longbox{$V(G) = B \cup A \cup F\cup T\cup S_{135}\cup S_{246}$.  
Moreover, if $S_{135}\cup S_{246}\neq\emptyset$, then $\ell=6$.}
\end{equation}
Proof: Consider any vertex $x\in V(G)\setminus (A\cup F)$, and let
$X=N_B(x)$.  So $\emptyset\neq X\neq B$.  Since $G$ is bull-free,
Lemma~\ref{lem:bfnh} implies that $X$ is either (i) a stable set, or
(ii) equal to $\{v_{i-1},v_i, v_{i+1}\}$ for some $i$, or (iii) equal
to $\{v_{i-1},v_i, v_{i+1}, v_{i+3}\}$ for some $i$ and $\ell=6$.
Suppose that (i) holds, and say $v_1\in X$.  Then $v_\ell, v_2\notin
X$ since $X$ is stable, and $v_3\in X$, for otherwise
$\{v_3,v_2,v_1,v_\ell,x\}$ induces a fork.  Repeating this argument,
we see that $x$ is adjacent to every second vertex of $B$, which
implies that $\ell$ is even.  Moreover, if $\ell\ge 8$, then
$\{v_5,x,v_1,v_2, v_\ell\}$ induces a fork.  So $\ell=6$, and $x\in
S_{135}\cup S_{246}$, and the second sentence of (\ref{7bafs}) holds.
If (ii) holds, then $x\in T$.  Finally suppose that (iii) holds.  Then
$\{v_i,x,v_{i+3},v_{i+2}, v_{i+4}\}$ induces a fork.  Thus
(\ref{7bafs}) holds.  In particular, every vertex in $V(G)\setminus B$
has either zero, three or $\ell$ neighbors in $B$.

Next:
\begin{equation}\label{7a0}
A=\emptyset.
\end{equation}
Proof: In the opposite case the union of $B$ and any vertex in $A$
induces a wheel, which, by Lemma~\ref{lem:homosets}, contradicts the
fact that $G$ is prime.  So (\ref{7a0}) holds.

Next:
\begin{equation}\label{7s0}
\mbox{$T=\emptyset$.}
\end{equation}
Proof: Suppose that there is a vertex $u\in S_{123}$.  Let $Y$ be the
set of vertices that are complete to $\{v_1,v_3\}$ and anticomplete to
$B\setminus \{v_1,v_2,v_3\}$.  So $v_2, u\in Y$.  Let $Z$ be the
vertex-set of the component of $G[Y]$ that contains $v_2$ and $u$.
Since $G$ is prime, $Z$ is not a homogeneous set, so there are
vertices $y,z\in Z$ and a vertex $w\in V(G)\setminus Z$ that is
adjacent to $y$ and not to $z$.  Let $B_y$ be the vertex-set of the
hole induced by $(B\setminus \{v_2\}) \cup\{y\}$ and let $B_z$ be
defined similarly.  Then there is an integer $p$ such that $w$ has $p$
neighbors in $B_y$ and $p-1$ neighbors in $B_z$, which contradicts the
analogue of (\ref{7bafs}) applied to $B_y$ and $B_z$.  So (\ref{7s0})
holds.

Next:
\begin{equation}\label{7f0}
F=\emptyset.
\end{equation}
Proof: Suppose the contrary.  Let $f \in F$.  Since $G$ is prime it is
connected, so there is an edge $fu$ for some $u\in V(G)\setminus F$.
By (\ref{7a0}) and (\ref{7s0}), we have $u\in S_{135}\cup S_{246}$ and
$\ell=6$, say $u\in S_{135}$.  But then $\{f,u,v_1,v_2,v_6\}$ induces
a fork.  So, (\ref{7f0}) holds.

\medskip

Now if $S_{135}\cup S_{246}=\emptyset$, then Claims
(\ref{7bafs})--(\ref{7f0}) imply that $V(G)=B$, so $G$ is a hole of
length $\ell$.  Therefore let us assume that $S_{135}\cup
S_{246}\neq\emptyset$, and so $\ell=6$.  We claim that:
\begin{equation}\label{7sss}
\mbox{$S_{135}$ and $S_{246}$ are stable sets.}
\end{equation}
Proof: Suppose that $S_{135}$ is not a stable set.  Let $Y$ be the
vertex-set of a component of $G[S_{135}]$ of size at least~$2$.  Since
$G$ is prime, $Y$ is not a homogeneous set, so there are vertices
$y,z\in Y$ and a vertex $w\in V(G)\setminus Y$ that is adjacent to $y$
and not to $z$, and since $Y$ is connected we may choose $y$ and $z$
adjacent.  By (\ref{7bafs}), (\ref{7a0}), (\ref{7s0}) and (\ref{7f0})
we have $w\in S_{246}$.  But then $\{z,y,w,v_2,v_4\}$ induces a fork.
So (\ref{7sss}) holds.

By (\ref{7sss}), $V(G)$ can be partitioned into the two stable sets
$S_{135}\cup\{v_2,v_4,v_6\}$ and $S_{246}\cup\{v_1,v_3,v_5\}$, so $G$
is a bipartite graph.  This completes the proof of the theorem.
\end{proof}

By the preceding theorem, the WVC problem in case the graph contains a
hole of length at least~$6$ can be reduced to the same problem in a
graph that is either bipartite or an odd hole.  If the graph is
bipartite it is perfect, so we can use the algorithm from \cite{GLS}.
If it is an odd hole, the details can be worked out directly, as
explained in the following lemma.  A \emph{hyperhole} is any graph $H$
such that $V(H)$ can be partitioned into $\ell$ cliques $A_1, \ldots,
A_\ell$ (for some integer $\ell\ge 4$) such that for each $i$ modulo
$\ell$ the set $A_i$ is complete to $A_{i-1}\cup A_{i+1}$ and
anticomplete to $A_{i+2}\cup A_{i+3}\cup\cdots\cup A_{i-3}\cup
A_{i-2}$.  The WVC problem on a hole of length $\ell$, where the
$i$-th vertex has integer weight $w_i$, is equivalent to coloring a
hyperhole of length $\ell$ where the $i$-th set $A_i$ has size $w_i$.
\begin{lemma}
Let $H$ be a hyperhole, where $V(H)$ is partitioned into sets $A_1,
..., A_\ell$ as above, with $\ell$ odd, $\ell\ge 5$.  Then $\chi (G) =
\max \{\omega(H), \lceil\frac{2|V(H)|}{\ell-1}\rceil\}$.
\end{lemma}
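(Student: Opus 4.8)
The plan is to set $w_i:=|A_i|$ and $n:=|V(H)|=\sum_{i=1}^{\ell}w_i$ (indices mod $\ell$ throughout), write $k:=\max\{\omega(H),\lceil 2n/(\ell-1)\rceil\}$, and prove $\chi(H)\ge k$ and $\chi(H)\le k$ separately. For the lower bound: since parts at cyclic distance at least $2$ are anticomplete, every clique of $H$ lies in $A_i\cup A_{i+1}$ for some $i$, so $\omega(H)=\max_i(w_i+w_{i+1})$; in particular $w_i\le\omega(H)\le k$ for all $i$, a fact used later, and trivially $\chi(H)\ge\omega(H)$. For the second lower bound, a stable set $S$ of $H$ meets each clique $A_i$ in at most one vertex, and the set of indices $i$ with $S\cap A_i\ne\emptyset$ is a stable set of the cycle $C_\ell$, which has independence number $(\ell-1)/2$ because $\ell$ is odd; hence $|S|\le(\ell-1)/2$. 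As the $\chi(H)$ colour classes cover all $n$ vertices, $\chi(H)\ge 2n/(\ell-1)$, and integrality gives $\chi(H)\ge\lceil 2n/(\ell-1)\rceil$. Together, $\chi(H)\ge k$.

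For the upper bound I will exhibit a proper colouring of $H$ using the palette $\mathbb{Z}_k=\{0,1,\dots,k-1\}$ in which each $A_i$ gets a set $C_i=\{c_i,c_i+1,\dots,c_i+w_i-1\}$ (reduced mod $k$) of $w_i$ cyclically consecutive colours, with $C_i\cap C_{i+1}=\emptyset$ for every $i$; overlaps between colour sets of non-consecutive (hence anticomplete) parts are harmless. Writing $g_i$ for the cyclic gap following $C_i$, i.e.\ $c_{i+1}\equiv c_i+w_i+g_i\pmod k$ with $g_i\in\{0,\dots,k-1\}$, the condition $C_i\cap C_{i+1}=\emptyset$ holds exactly when $0\le g_i\le k-w_i-w_{i+1}$, a nonempty range since $k\ge\omega(H)\ge w_i+w_{i+1}$; and the placement closes up around the cycle precisely when $\sum_{i=1}^{\ell}(w_i+g_i)\equiv 0\pmod k$. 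So it suffices to choose integers $g_1,\dots,g_\ell$ with $0\le g_i\le k-w_i-w_{i+1}$ whose sum is $\equiv -n\pmod k$; since $\sum_i(k-w_i-w_{i+1})=\ell k-2n$, the attainable values of $\sum_i g_i$ are exactly the integers in $[0,\ell k-2n]$, so I only need an integer of the form $dk-n$ in that interval, equivalently an integer $d$ with $n\le dk\le \ell k-n$ (such $d$ is automatically at least $1$).

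The only point needing care, and where the hypothesis is used, is that such a $d$ exists: from $k\ge\lceil 2n/(\ell-1)\rceil$ we get $2n/k\le\ell-1$. Take $d:=\lceil n/k\rceil$, so $dk\ge n$ holds. Write $n=qk+r$ with $0\le r<k$. If $r=0$ then $d=q$ and $dk=n\le\ell k-n$ since $2n\le(\ell-1)k<\ell k$. If $r>0$ then $2q<2q+2r/k=2n/k\le\ell-1$, so $2q\le\ell-2$ by integrality, and $d=q+1$ gives $dk-(\ell k-n)=(2q+1-\ell)k+r\le -k+r<0$, i.e.\ $dk\le\ell k-n$. Fixing such $g_i$, set $c_1:=0$ and $c_{i+1}:=(c_i+w_i+g_i)\bmod k$ for $i=1,\dots,\ell$: within each step the $w_i+g_i+w_{i+1}\le k$ consecutive integers $c_i,c_i+1,\dots$ contain the representatives of $C_i$, then $g_i$ gap positions, then the representatives of $C_{i+1}$, so reduction mod $k$ keeps $C_i$ and $C_{i+1}$ disjoint; and $c_{\ell+1}=c_1$ by the chosen congruence. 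This is a proper colouring of $H$ with $k$ colours, so $\chi(H)\le k$, and with the lower bound $\chi(H)=k$. I expect the elementary case analysis showing $d=\lceil n/k\rceil$ lands in $[\,n/k,\;\ell-n/k\,]$ to be the only real obstacle; everything else is bookkeeping.
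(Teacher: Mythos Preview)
Your argument is correct, but it proceeds quite differently from the paper's. The paper proves the upper bound by induction on $|V(H)|$: it finds a stable set $S$ of size $m=(\ell-1)/2$ that meets every maximum clique (choosing one vertex from each of $A_2,A_4,\ldots,A_{\ell-1}$, after rotating so that $|A_\ell\cup A_1|<\omega(H)$ if possible), observes that both $\omega$ and $\lceil |V(H)|/m\rceil$ drop by exactly~$1$ upon removing $S$, and recurses. Your proof is instead a direct construction: you assign to each clique $A_i$ a cyclic interval $C_i\subset\mathbb{Z}_k$ of $w_i$ consecutive colours and reduce the disjointness constraint $C_i\cap C_{i+1}=\emptyset$ for all $i$ to finding gaps $g_i\in[0,k-w_i-w_{i+1}]$ summing to a prescribed residue mod~$k$, which you settle by the short arithmetic argument with $d=\lceil n/k\rceil$. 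The inductive proof is perhaps more transparent combinatorially and gives an algorithm that peels off colour classes one at a time; your construction is explicit and non-recursive, and makes visible the ``circular-arc'' structure of the problem. One cosmetic remark: the word ``exactly'' in your disjointness criterion is stronger than you need or use---you only require (and only justify) the ``if'' direction $g_i\le k-w_i-w_{i+1}\Rightarrow C_i\cap C_{i+1}=\emptyset$.
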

\begin{proof}
Let $m=\frac{\ell-1}{2}$, and let $r(H)=\max \{\omega(H),
\lceil\frac{|V(H)|}{m}\rceil\}$.  Clearly we have $\chi(H)\ge
\omega(H)$, and also $\chi(H)\ge \lceil\frac{|V(H)|}{m}\rceil$ since
$m$ is the maximum size of a stable set in $H$.  So $\chi(H)\ge r(H)$.
Now let us show that $H$ admits a coloring of size $r(H)$.  We prove
this by induction on $V(H)$.  Note that the maximal cliques of $H$ are
the sets $A_i\cup A_{i+1}$, $i=1,...,\ell$.

First suppose that there is an integer $i$ such that $|A_i\cup
A_{i+1}| <\omega(G)$, say $i=\ell$.  For each even $j\in
\{2,4,...,\ell-1\}$ pick a vertex $a_j\in A_j$, and let $S=\{a_2, a_4,
..., a_{\ell-1}\}$.  Then $S$ is a stable set, and $S$ meets every
$\omega(H)$-clique of $H$, so $\omega(H\setminus S)=\omega(H)-1$.
Moreover $|S|=m$, so $\lceil\frac{|V(H\setminus S)|}{m}\rceil =
\lceil\frac{|V(H)|}{m}\rceil -1$.  It follows that $r(H\setminus
S)=r(H)-1$.  If $A_j=\{a_j\}$ for some even $j$, then $H\setminus S$
is a perfect graph (indeed a chordal graph), so $\chi(H\setminus
S)=\omega(H\setminus S)= r(H\setminus S)$.  Otherwise $H\setminus S$
is a hyperhole of length $\ell$, and, by the induction hypothesis,
$\chi(H\setminus S)=r(H\setminus S)$.  In either case, we can take any
$\chi(H\setminus S)$-coloring of $H\setminus S$ and add $S$ as a new
color class, and we obtain a coloring of $H$ with $r(H)$ colors.

Now suppose that $|A_i\cup A_{i+1}| =\omega(G)$ for all $i$.  Since
$\ell$ is odd, this means that the numbers $|A_i|$ ($i\in
\{1,...,\ell\}$) are all equal to some integer $q$.  Then $\omega(H)=
2q$ and $|V(H)|=\ell q=(2m+1)q$, so $\omega(H) <
\lceil\frac{|V(H)|}{m}\rceil$, and so $r(H)=
\lceil\frac{|V(H)|}{m}\rceil$.  Let $S$ be defined as above.  Then, as
above, we have $\lceil\frac{|V(H\setminus S)|}{m}\rceil =
\lceil\frac{|V(H)|}{m}\rceil -1$.  Hence $r(H\setminus S)=r(H)-1$, and
again we can take any $\chi(H\setminus S)$-coloring of $H\setminus S$
and add $S$ as a new color class, and we obtain a coloring of $H$ with
$r(H)$ colors.
\end{proof}

\begin{theorem}\label{thm:fbp}
Let $G$ be a prime (fork, bull)-free graph that contains no hole of
length at least $6$.  Suppose that $G$ contains a $P_5$
$v_1$-$v_2$-$v_3$-$v_4$-$v_5$.  Then either $G$ has a clique cutset,
or there is an optimal coloring of $G$ in which $\{v_1,v_3,v_5\}$ is a
color class.
\end{theorem}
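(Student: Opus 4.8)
The plan is to analyze the neighborhood structure around the fixed $P_5$ $P=v_1$-$v_2$-$v_3$-$v_4$-$v_5$, using the fact that $G$ is (fork, bull)-free and $C_k$-free for all $k\ge 6$. First I would set $S=\{v_1,v_3,v_5\}$, which is a stable set since $P$ is chordless, and study how an arbitrary vertex $x\in V(G)\setminus V(P)$ attaches to $P$. Because $G$ has no fork and no bull, the possible traces $N_P(x)$ on the five vertices of $P$ are severely limited: for instance, $x$ adjacent to $v_1$ and $v_3$ but not $v_2$ would, together with a suitable end of $P$, tend to create a fork, and $x$ adjacent to $v_2,v_4$ but not to $v_1,v_3,v_5$ gives a bull with $\{v_1,v_2,v_3,v_4,v_5\}$-type configurations; so I would enumerate the admissible types. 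The key dichotomy I expect to exploit is: either some vertex of $S$, say $v_3$, has a neighborhood that, relative to the rest of $G$, is "dominated" in a way that lets us recolor, or else the structure forces a clique cutset.

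The core idea for producing an optimal coloring in which $S$ is a color class is a Kempe-type / recoloring argument combined with clique-cutset extraction. Concretely, I would consider an arbitrary optimal coloring $c$ of $G$ and try to modify it so that $v_1,v_3,v_5$ share a color. The obstruction to doing so is a vertex adjacent to two of $\{v_1,v_3,v_5\}$ but colored "in between" them; the (fork, bull)-free hypothesis should force such obstructing vertices to lie in a well-behaved set — plausibly a clique, or a set whose removal disconnects $G$. I would next argue that if $G$ has no clique cutset, then the set of vertices "seeing" at least two of $v_1,v_3,v_5$ together with $\{v_2,v_4\}$ behaves like a homogeneous/dominating structure, so the standard trick of merging $v_1,v_3,v_5$ into one vertex of weight $3$ (or swapping colors along an alternating subgraph) goes through without increasing $\chi$. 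Here Lemma~\ref{lem:bfnh} and Lemma~\ref{lem:homosets} are the natural tools: any would-be wheel/fan/umbrella/parasol around $P$ yields a proper homogeneous set, contradicting primeness, so the local structure is pinned down and the only remaining way a recoloring can fail is the presence of a clique cutset.

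The main obstacle will be the middle vertex $v_3$ and the vertices adjacent to exactly one or two of $\{v_1,v_3,v_5\}$: ruling these out or taming them is where the bull-freeness and fork-freeness must be pushed hardest, and where careful case analysis on the trace $N_P(x)$ (and on adjacencies among such $x$'s) is unavoidable. I would organize this as a sequence of claims: (a) classify $N_P(x)$ for all $x$; (b) show that if no clique cutset exists then the vertices complete to $\{v_1,v_3\}$, to $\{v_3,v_5\}$, or to $\{v_1,v_5\}$ are controlled (e.g.\ each such set is anticomplete to the "private" neighborhoods of the third vertex, or forms a clique together with $v_2$ or $v_4$); (c) build an explicit alternating recoloring, or equivalently contract $\{v_1,v_3,v_5\}$ and invoke that the contracted graph still has $\chi_w = \chi(G)$. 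Steps (b) and (c) are the delicate part; step (a) is routine forbidden-subgraph bookkeeping. I expect the final argument to conclude: in the absence of a clique cutset, merging $v_1,v_3,v_5$ does not raise the (weighted) chromatic number, hence an optimal coloring with $\{v_1,v_3,v_5\}$ as a color class exists.
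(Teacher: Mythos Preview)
Your high-level plan --- classify the traces $N_P(x)$, extract structure, then either exhibit a clique cutset or recolor --- is the right skeleton, and step~(a) is exactly what the paper does first.  But the proposal is missing the two concrete structural facts that make the argument go through, and without them your steps (b) and (c) remain hopes rather than proofs.

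The first missing piece is \emph{where the clique cutset comes from}.  After the trace classification one should group the sets as $V_1=\{v_1\}\cup S_{12}$, $V_3=\{v_3\}\cup S_{24}\cup S_{234}$, $V_5=\{v_5\}\cup S_{45}$ (and $V_2,V_4,A'$ similarly) and prove that $V_1,V_3,V_5$ are \emph{homogeneous sets}, hence cliques by primeness.  One then shows that $S_1\cup F$ (resp.\ $S_5\cup F$) is anticomplete to everything except $V_1$ (resp.\ $V_5$); this is where Lemma~\ref{lem:homosets} is actually used (a $6$-fan or parasol would otherwise appear).  Thus if $S_1\cup S_5\cup F\neq\emptyset$, one of the cliques $V_1,V_5$ is a cutset.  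Your proposal never identifies which set is the candidate cutset, and the vague ``vertices seeing at least two of $v_1,v_3,v_5$'' does not lead there.

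The second missing piece is why the recoloring is trivial.  Once $S_1\cup S_5\cup F=\emptyset$, the structure gives $N(V_1)=V_2\cup A'\subseteq N(v_3)$ and symmetrically $N(V_5)\subseteq N(v_3)$.  Hence in any optimal coloring, the color of $v_3$ either already appears on some vertex of $V_1$ (and we may assume it is $v_1$, since $V_1$ is homogeneous) or it appears on no neighbor of $v_1$, so we can simply give $v_1$ that color; same for $v_5$.  This is a one-step recoloring, not a Kempe-chain argument; your alternating-subgraph plan is both unnecessary and, absent the dominance $N(V_1)\subseteq N(v_3)$, not obviously sound.  (Also, a small slip: $x\in S_{24}$ does \emph{not} immediately give a bull with $V(P)$; that set is eliminated only later, via the homogeneity of $V_3$.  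And your sentence ``the only remaining way a recoloring can fail is the presence of a clique cutset'' has the logic inverted --- the clique cutset case is the one where you do \emph{not} need to recolor.)
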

\begin{proof}
Let $B=\{v_1,...,v_5\}$.  For any $J\subseteq \{1,...,5\}$, let
$S_J=\{x\in V(G)\setminus B\mid$ $N(x)\cap B=\{v_j\mid j\in J\}\}$.
Let $A=S_{1,2,3,4,5}$ and $F=S_{\emptyset}$.  We claim that:
\begin{equation}\label{pvg}
\longbox{$V(G) = B \cup A\cup F\cup S_1\cup S_5 \cup S_{12}\cup
S_{45}\cup S_{24}\cup S_{123}\cup S_{234}\cup S_{345}\cup S_{135}\cup
S_{1345}\cup S_{1235}$.}
\end{equation}
Proof: Consider any vertex $x\in V(G)\setminus (A\cup F)$, and let
$X=N_B(x)$; so $1\le |X|\le 4$.  \\
Suppose that $|X|=1$, so $X=\{v_i\}$ for some $i$.  If $i\in\{2,3\}$,
then $\{v_{i+2}, v_{i+1}, v_i, v_{i-1}, x\}$ induces a fork.  Hence,
by symmetry, $i\in\{1,5\}$ and so $x\in S_1\cup S_5$.  \\
Now suppose that $|X|=2$.  Up to symmetry we have the following six
cases.  (i)~$X=\{v_1,v_2\}$.  Then $x\in S_{12}$.
(ii)~$X=\{v_1,v_3\}$.  Then $\{v_5,v_4,v_3,v_2,x\}$ induces a fork.
(iii)~$X=\{v_1,v_4\}$.  Then $\{v_1,x,v_4,v_3,v_5\}$ induces a fork.
(iv)~$X=\{v_1,v_5\}$.  Then $B\cup\{x\}$ induces a hole of length~$6$,
a contradiction.  (v)~$X=\{v_2,v_3\}$.  Then $\{v_1,v_2,x,v_3,v_4\}$
induces a bull.  (vi)~$X=\{v_2,v_4\}$.  Then $x\in S_{24}$.  \\
Now suppose that $|X|=3$.  Up to symmetry we have the following four
cases.  (i)~$X=\{v_i,v_{i+1},v_{i+2}\}$ for some $i\in \{1,2,3\}$.
Then $x\in S_{123}\cup S_{234}\cup S_{345}$.  (ii)~$X=\{v_1,v_2,v_i\}$
for some $i\in\{4,5\}$.  If $i=4$, then $\{v_1,x,v_4,v_3,v_5\}$
induces a fork.  If $i=5$, then $\{v_3,v_2,v_1,x,v_5\}$ induces a
bull.  (iii)~$X=\{v_2,v_3,v_5\}$.  Then $\{v_1,v_2,x,v_3,v_4\}$
induces a bull.  (iv)~$X=\{v_1,v_3,v_5\}$.  Then $x\in S_{135}$.  \\
Finally suppose that $|X|=4$, and let $v_j$ be the non-neighbor of $x$
in $B$.  If $j=1$ or $3$, then $\{v_1,v_2,v_3,x,v_5\}$ induces a bull.
Similarly there is a bull if $j=5$.  So $j\in \{2,4\}$, and so $x\in
S_{1345}\cup S_{1235}$.  Thus (\ref{pvg}) holds.

\medskip

Define the following sets. Let:
\begin{eqnarray*}
V_1 &:=& \{v_1\}\cup S_{12}. \\
V_2 &:=& \{v_2\}\cup S_{123}.\\
V_3 &:=& \{v_3\}\cup S_{24}\cup S_{234}.\\
V_4 &:=& \{v_4\}\cup S_{345}.\\
V_5 &:=& \{v_5\}\cup S_{345}.\\
A' &:=&A\cup S_{135}\cup S_{1235}\cup S_{1345}.
\end{eqnarray*}

Now we claim that:
\begin{equation}\label{vihs}
\mbox{$V_1$, $V_3$ and $V_5$ are homogeneous sets.}
\end{equation}
Proof: Suppose that $V_1$ is not a homogeneous set.  So there are
vertices $y,z\in V_1$ and a vertex $w\in V(G)\setminus V_1$ that is
adjacent to $y$ and not to $z$.  Clearly $w\notin\{v_2,v_3,v_4,v_5\}$.
Then $yz\in E(G)$, for otherwise $\{v_4,v_3,v_2,y,z\}$ induces a fork.
Suppose that $wv_2\notin E(G)$.  Then $wv_3\in E(G)$, for otherwise
$\{w,y,z,v_2,v_3\}$ induces a bull; and $wv_5\notin E(G)$, for
otherwise $\{z,y,w,v_3,v_5\}$ induces a fork; and $wv_4\notin E(G)$,
for otherwise $\{v_2,v_3,w,v_4,v_5\}$ induces a bull.  But then
$\{v_5,v_4,v_3,v_2,w\}$ induces a fork.  Hence $wv_2\in E(G)$.
Suppose that $wv_3\in E(G)$.  Then $wv_4\in E(G)$, for otherwise
$\{z,v_2,w,v_3,v_4\}$ induces a bull; and $wv_5\in E(G)$, for
otherwise $\{y,w,v_3,v_4,v_5\}$ induces a bull.  But then
$\{z,y,w,v_3,v_5\}$ induces a fork.  Hence $wv_3\notin E(G)$.  Then,
by (\ref{pvg}), $w \in S_{24}$.  But, then $\{y, w, v_3, v_4, v_5\}$
induces a fork, a contradiction.  So $V_1$ is a homogeneous set, and
similarly $V_5$ is a homogeneous set.  \\
Now suppose that $V_3$ is not a homogeneous set.  So there are
vertices $y,zi\in V_3$ and a vertex $w\in V(G)\setminus V_3$ that is
adjacent to $y$ and not to $z$.  Clearly $w\notin\{v_1,v_2,v_4,v_5\}$,
and $w \notin S_{24}\cup S_{234}$.  The vertex $w$ has a neighbor in
$\{v_1,v_2\}$, for otherwise $\{v_1,v_2,w,y,z\}$ induces a fork or a
bull (depending on the adjacency between $y$ and $z$), and similarly
$w$ has a neighbor in $\{v_4,v_5\}$.  So, by (\ref{pvg}), $w \in
S_{135} \cup S_{1345} \cup S_{1235}$.  If $w \in S_{135}$, then
$\{v_1,v_2,z,v_4,v_5,w\}$ induces a hole of length~$6$, and if $w \in
S_{1235} \cup S_{1345}$, then $\{v_1, v_2, y, z, v_5\}$ or $\{v_4,
v_5, y, z, v_1\}$ induces a bull.  Thus (\ref{vihs}) holds.

\begin{equation}\label{v2v4}
\mbox{$V_2$ is anticomplete to $V_4$.}
\end{equation}
Proof: Suppose that there is an edge $st$ with $s\in V_2$ and $t\in
V_4$.  The definition of these sets implies that $s\in S_{123}$ and
$t\in S_{345}$.  Then $\{v_1,s,v_3,t,v_5\}$ induces a bull, a
contradiction.  Thus (\ref{v2v4}) holds.

\medskip

It follows from (\ref{vihs}) and (\ref{v2v4}) that for each
$i\in\{1,2,3,4\}$ the set $V_i$ is complete to $V_{i+1}$ and that
there is no other edge between any two $V_i$'s.  Moreover, since $G$
is quasi-prime, each of $V_1, V_3, V_5$ is a clique.  (Also, since
$V_3$ is a clique, we have $S_{24}=\emptyset$.)  Claim~(\ref{vihs})
also implies that $V_1\cup V_3\cup V_5$ is complete to $A'$.

Now we claim that:
\begin{equation}\label{psf1}
\longbox{$S_1$ is anticomplete to $V_2\cup V_3\cup V_4\cup V_5\cup
A'$.  Similarly, $S_5$ is anticomplete to $V_1\cup V_2\cup V_3\cup
V_4\cup A'$.}
\end{equation}
Proof: Suppose, up to symmetry, that there is an edge $st$ with $s\in
S_1$ and $t\in V_2\cup V_3\cup V_4\cup V_5\cup A'$.  If $t\in V_3\cup
V_5$, then (\ref{vihs}) is contradicted since $sv_3, sv_5\notin E(G)$.
If $t\in V_4$, then $t\in S_{345}$, and $\{s,t,v_4,v_3,v_2\}$ induces
a bull.  So $\{v_1, v_2\} \subseteq N_B(t)$.  Since $\{v_2, v_3, v_4,
s, t\}$ does not induce a fork or a bull, and by~(\ref{pvg}), we have
$N_B(t) =B$.  But then $B\cup\{s,t\}$ induces a $6$-fan, which
contradicts Lemma~\ref{lem:homosets}.  Thus (\ref{psf1}) holds.

\begin{equation}\label{psf2}
\mbox{$F$ is anticomplete $V_1\cup V_2\cup V_3\cup V_4\cup V_5\cup A'$.}
\end{equation}
Proof: Suppose on the contrary that there is an edge $ft$ with $f\in
F$ and $t\in V_1\cup V_2\cup V_3\cup V_4\cup V_5\cup A'$.  If $t\in
V_1\cup V_3\cup V_5$, then (\ref{vihs}) is contradicted since $f$ has
no neighbor in $B$.  If $t\in S_{123}\cup S_{345} \cup S_{1235}\cup
S_{1345}$, then there is $i\in\{1,2,3\}$ such that
$\{f,t,v_i,v_{i+1},v_{i+2}\}$ induces a bull.  If $t\in S_{135}$, then
$\{f,t,v_3,v_2,v_4\}$ induces a fork.  If $t\in A$, then
$B\cup\{t,f\}$ induces a parasol, which contradicts
Lemma~\ref{lem:homosets}.  Thus (\ref{psf2}) holds.

\begin{equation}\label{psf3}
\mbox{No component of $G[F\cup S_1\cup S_5]$ contains vertices from
both $S_1,S_5$.}
\end{equation}
Proof: In the opposite case, there is a chordless path $P$ with an end
in $S_1$, an end in $S_5$, and interior in $F$.  Then $V(P)\cup B$
induces a hole of length at least~$7$, a contradiction.  Thus
(\ref{psf3}) holds.

\begin{equation}\label{psf4}
\mbox{Any component $H$ of $G[S_1\cup S_5\cup F]$ satisfies either
$N(V(H))\subseteq V_1$ or $N(V(H))\subseteq V_5$.}
\end{equation}
Proof: This follows immediately from (\ref{psf1}), (\ref{psf2}) and (\ref{psf3}).

\medskip

Suppose that $S_1\cup S_5\cup F\neq\emptyset$.  So there exists a
component $H$ of $G[S_1\cup S_5\cup F]$ such that, by
(\ref{psf1})--(\ref{psf4}) and up to symmetry, $N(V(H))\subseteq V_1$.
So $V_1$ is a clique cutset of $G$, and in this case the theorem
holds.
Therefore we may assume that $S_1\cup S_5\cup F=\emptyset$.  It
follows that $V(G)=V_1\cup V_2\cup V_3\cup V_4\cup V_5\cup A'$.
Finally, we claim that:
\begin{equation}\label{vvvc}
\mbox{There is an optimal coloring of $G$ in which $\{v_1,v_3,v_5\}$
is a color class.}
\end{equation}
Proof: Let $f$ be any optimal coloring of $G$, and let $c$ be the
color of $v_3$.  If $c$ appears in $V_1$, then, up to swapping
vertices of $V_1$, we may assume that $c=f(v_1)$.  If $c$ does not
appear in $V_1$, then, since $N(V_1)=V_2\cup A' \subset N(v_3)$, color
$c$ does not appear in $V_2\cup A'$.  Hence we can safely change the
color of $v_1$ to $c$.  We can do the same in $V_5$ with $v_5$.  Thus
we obtain a coloring of $G$ where $\{v_1,v_3,v_5\}$ is a color class,
without increasing the number of colors.  So (\ref{vvvc}) holds.  This
completes the proof of the theorem.
\end{proof}

\no{\it Proof of Theorem~\ref{thm:forkbull}.} Here is an algorithm for
coloring a (fork, bull)-free graph $G$.  Using modular decomposition
(Theorem~\ref{thm:ML-1}), we may assume that $G$ is prime; and using
clique cutset decomposition (Theorem~\ref{thm:ML-2}), we may assume
that $G$ has no clique cutset.  If $G$ contains a hole of length at
least~$6$ (and this can be tested in polynomial time), then by
Theorem~\ref{thm:fbh}, $G$ is either an odd hole or a bipartite graph,
and in the case the solution can be computed directly, as explained
above.  Suppose that $G$ contains no hole of length at least~$6$ and
that $G$ contains a $P_5$ (and this can be tested in time
$O(|V(G)|^5)$).  By Theorem~\ref{thm:fbp}, we find a stable set $S$
(of size~$3$) such that $\chi(G)=\chi(G\setminus S)+1$, and we apply
the algorithm recursively on $G\setminus S$.  Finally, suppose that
$G$ contains no $P_5$.  Then we can use the algorithm for the class of
($P_5$, bull)-free graphs given in Section~\ref{sec:p5bull}.  Clearly
the total complexity is polynomial.  \hfill $\Box$
 
\subsection*{Concluding remarks} 

In this paper, we studied the computational complexity of
\textsc{Weighted Vertex Coloring} in classes of graphs defined by two
forbidden induced subgraphs, in particular for the class of $(H_1,
H_2)$-free graphs where $H_1, H_2$ are connected graphs on five
vertices.  The results of this paper together with earlier known
results (see \cite{Malyshev2014, ML, DP2018}) imply that the WVC
problem is solvable in polynomial time for all but three classes of
graphs defined by two forbidden connected induced subgraphs on five
vertices.  These three classes are $(P_5, \overline{2P_2+P_1})$-free
graphs, $(P_5, K_{2,3})$-free graphs, and $(P_5,
\overline{K_3+O_2})$-free graphs, and for each of them the complexity
status is still unknown.  We conjecture that for these classes too the
WVC problem is solvable in polynomial time.  Moreover, we refer to
\cite{DP2018} for more open problems on ($P_5, H$)-free graphs, for
various $H$.

\subsection*{Acknowledgement} 

The first author would like to thank Mathew~C.~Francis for fruitful
discussions.
The second and third authors are
supported by ANR grant  ANR-13-BS02-0007 - STINT.

\small

\end{document}